\def\dfrac{\displaystyle\frac}
\def\dsum{\displaystyle\sum}
\newtheorem{prop}{Proposition}
\newtheorem{theo}[prop]{Theorem}
\newtheorem{lemm}[prop]{Lemma}
\newtheorem{defi}[prop]{Definition}
\newtheorem{conj}[prop]{Conjecture}
\newcommand{\al}{\alpha}
\newcommand{\abc}[1]{\left( #1 \right)}%
\renewcommand{\leq}{\leqslant}
\renewcommand{\geq}{\geqslant}
\newcommand{\p}{\partial}
\numberwithin{equation}{section}
\begin{document}

\title{Notes on the curvature estimates for Hessian equations}

\author{Changyu Ren}
\address{School of Mathematical Science\\
Jilin University\\ Changchun\\ China}
\email{rency@jlu.edu.cn}
\author{Zhizhang Wang}
\address{School of Mathematical Science\\ Fudan University \\ Shanghai, China}
\email{zzwang@fudan.edu.cn}
\thanks{Research of the first author is supported by NSFC Grant No. 11871243 and the second author is supported  by NSFC Grants No.11871161 and 11771103.}
\begin{abstract}
The main result of this paper gives a plenary proof on the curvature estimates for $k$ curvature equations with general
right hand sides with $n<2k$ based on a concavity inequality. We further give a explicit lower bound of the inequality.  
\end{abstract}
\maketitle

\section{introduction}

In this paper, we continue to  study the longstanding problem about the global curvature estimates for curvature equations with general right hand side
\begin{equation}\label{1.1}
\sigma_k(\kappa(X))=\psi(X,\nu(X)), \quad \forall  X\in M,
\end{equation}
where $\sigma_k$ is the $k$-th elementary symmetric function, $\nu(X)$ and $\kappa(X)$ denote the outer normal vector and the principal curvatures of the hypersurface $X: M\rightarrow \mathbb R^{n+1}$, respectively. This problem was clearly posed by Guan-Li-Li in \cite{GLL} at first. Moreover, it is  very nature  to  consider the equation \eqref{1.1} with the right hand side containing the normal vector or in other words, gradient terms.

Equation \eqref{1.1} is associated with many important geometric problems.
In particular, the  famous  Minkowski problem, namely, the prescribed Gauss-Kronecker curvature on the outer normal, has been widely discussed by Nirenberg \cite{N}, Pogorelov \cite{P3}, Cheng-Yau \cite{CY}.
 Alexandrov \cite{A2, gg} also posed the problem of prescribing general Weingarten curvature on the outer normal.
Moreover, the prescribing curvature measure problem in convex geometry
has been extensively studied by Alexandrov \cite{A1}, Pogorelov
\cite{P1}, Guan-Lin-Ma \cite{GLM}, Guan-Li-Li \cite{GLL}, while the
prescribing mean curvature problem and Weingarten curvature problem
also have been considered and obtained fruitful results by
Bakelman-Kantor \cite{BK}, Treibergs-Wei \cite{TW}, Oliker \cite{O},
Caffarelli-Nirenberg-Spruck \cite{CNS4, CNS5}. More geometric applications can  be found in
\cite{LO,BL,AB,PPZ1,PPZ2,GL,X,BIS}, etc. Very recently, Ren-Wang-Xiao \cite{RWXiao} obtained the convexity of bounded entire space like hypersurfaces with constant $\sigma_{n-1}$ curvature in Minkowski space and constructed a lot of examples of these type, by using some techniques developed in \cite{GRW, LiRW} and \cite{RW}.

The $C^2$ a prior estimate for \eqref{1.1} has been studied extensively.
 When $\psi$ is independent of the normal vector, the $C^2$-estimate
was obtained by Caffarelli-Nirenberg-Spruck \cite{CNS3} for a general class of fully nonlinear operators.
Ivochkina \cite{I1,I} considered the Dirichlet problem of equation (1.1) on domains in $\mathbb{R}^n$, the $C^2$ estimate was proved there under some extra conditions on the dependence of $\psi$ on $\nu$.
The Pogorelov type interior $C^2$ estimate for the Hessian equation have been obtained by Chou-Wang \cite{CW}.
Sheng-Urbas-Wang \cite{SUW} obtained the Pogorelov type interior $C^2$ estimate for the curvature equation  of the graphic hypersurface.
$C^2$ estimates for the complex Hessian equations defined on K\"ahler manifolds have been obtained by Hou-Ma-Wu  \cite{HMW}.
The $C^2$ estimate was also established for the equation of the prescribed curvature measure problem by Guan-Li-Li \cite{GLL} and Guan-Lin-Ma \cite{GLM}.
If the function $\psi$ is  convex with respect to the normal, the global $C^2$ estimate is well known, which is obtained by Guan \cite{B}. Recently, Guan \cite{B2}  obtained an important result on  $C^2$ estimates for some  fully nonlinear equations defined on Riemannian manifolds.

In recent years, the authors have made many progresses on establishing $C^2$ estimates for equation \eqref{1.1}. More precisely,
Guan-Ren-Wang \cite{GRW} obtained the global curvature estimate of the closed convex hypersurface and the star-shaped $2$-convex hypersurface.
The corresponding case in complex setting has been established by Phong-Picard-Zhang \cite{PPZ3} on the K\"ahler manifold and Dong \cite{D} on the Hermitian mainifold. Li-Ren-Wang \cite{LRW} improved the convex condition  to $k+1$- convex condition for any Hessian equations and derived the Pogorelov type interior $C^2$ estimates. For the case $k=n-1$, Ren-Wang \cite{RW} obtained the global curvature estimates of $n-1$ convex solutions for $n-1$ Hessian equations and completely solved the longstanding problem.
Chen-Li-Wang \cite{CLW} established the global curvature estimate for the prescribed curvature problem in arbitrary warped product spaces. Li-Ren-Wang \cite{LiRW} considered the global curvature estimate of convex solutions for a class of general  Hessian equations.
Spruck-Xiao \cite{SX} obtained the
curvature estimate for the prescribed scalar curvature problem in space forms and
gave a simple proof of Theorem 1.6 in \cite{GRW}.

Before starting our main theorem, we need to introduce the admissible set for equation \eqref{1.1}. Following \cite{CNS3}, we define an open,
convex, symmetric (invariant under the interchange of any two
$\kappa_i$) cone with vertex at the origin,  containing the positive
cone, $\Gamma_{+}=\{\kappa\in \mathbb{R}^n ;$ each component $\kappa_i>0,1\leq
i\leq n\}$:
\begin{defi}\label{k-convex} For a domain $\Omega\subset \mathbb R^n$, a function $v\in C^2(\Omega)$ is called $k$-convex if the eigenvalues $\kappa (x)=(\kappa_1(x), \cdots, \kappa_n(x))$ of the hessian $\nabla^2 v(x)$ is in $\Gamma_k$ for all $x\in \Omega$, where $\Gamma_k$ is the G\r{a}rding's cone
\begin{equation}\label{def G2}
\Gamma_k=\{\kappa \in \mathbb R^n \ | \quad \sigma_m(\kappa)>0,
\quad  m=1,\cdots,k\}.\nonumber
\end{equation}
A $C^2$ regular hypersurface $M\subset R^{n+1}$ is called $k$-convex if its principal curvature vector
$\kappa(X)\in \Gamma_k$ for all $X\in M$.
\end{defi}

The purpose of the present paper is to establish the global curvature estimate based on the following concavity conjecture:

\begin{conj} \label{con}
Assume that $\kappa=(\kappa_1,\cdots,\kappa_n)\in\Gamma_{k}$ with
$n<2k$, $\kappa_1$ is the maximum entry of $\kappa$, and
$\sigma_{k}(\kappa)$ has the absolutely positive lower bound and
upper bound, $N_0\leq \sigma_{k}(\kappa)\leq N_1$. For any given
index $1\leq i\leq n$, if $\kappa_i>\kappa_1-\sqrt{\kappa_1}/n$, the
following quadratic form is non negative,
\begin{align}\label{s3.01}
&\kappa_i\Big[K\Big(\sum_j\sigma_{k}^{jj}(\kappa)\xi_j\Big)^2-\sigma_{k}^{pp,qq}(\kappa)\xi_{p}\xi_{q}\Big]
-\sigma^{ii}_{k}(\kappa)\xi_{i}^2+\sum_{j\neq i}a_j\xi_{j}^2\geq 0,
\end{align}
for any $n$ dimensional vector
$\xi=(\xi_1,\xi_2,\cdots,\xi_n)\in\mathbb{R}^n$, when $\kappa_1$ and
the constant $K$ are sufficiently large. Here $a_j$ is defined by
\begin{eqnarray}\label{aj}
a_j=\sigma_{k}^{jj}(\kappa)+(\kappa_i+\kappa_j)\sigma_{k}^{ii,jj}(\kappa).
\end{eqnarray}
\end{conj}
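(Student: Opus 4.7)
The plan is to prove the inequality by first recasting the coefficients $a_j$ in a cleaner form, then partitioning the indices according to proximity to the maximum eigenvalue $\kappa_1$, and finally balancing the indefinite terms against the rank-one $K$-term and the diagonal corrections.

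First, the standard identity $\sigma_k^{jj}(\kappa)=\sigma_k^{ii}(\kappa)+(\kappa_i-\kappa_j)\sigma_k^{ii,jj}(\kappa)$ for $j\neq i$ immediately yields
$$a_j=\sigma_k^{ii}(\kappa)+2\kappa_i\sigma_k^{ii,jj}(\kappa),$$
and substituting back turns \eqref{s3.01} into
\begin{align*}
Q(\xi)={}&K\kappa_i\Big(\sum_j\sigma_k^{jj}\xi_j\Big)^2-\kappa_i\sigma_k^{pp,qq}\xi_p\xi_q\\
&+\sigma_k^{ii}\Big(\sum_{j\neq i}\xi_j^2-\xi_i^2\Big)+2\kappa_i\sum_{j\neq i}\sigma_k^{ii,jj}\xi_j^2\geq 0.
\end{align*}
The structure becomes transparent: three positive contributions (the rank-one $K$-term, the $\sigma_k^{ii}$-diagonal on $j\neq i$, and the $2\kappa_i\sigma_k^{ii,jj}$-diagonal) face the indefinite off-diagonal term and the isolated $-\sigma_k^{ii}\xi_i^2$.

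Second, I would partition $\{1,\dots,n\}$ into the \emph{good} set $G=\{j:\kappa_j>\kappa_1-\sqrt{\kappa_1}/n\}$ and its complement $B$, with $i\in G$ by hypothesis. Combining $\kappa\in\Gamma_k$, the two-sided bound $N_0\leq\sigma_k\leq N_1$, and $n<2k$, one derives sharp $\kappa_1$-asymptotics for the various $\sigma_k^{jj}$, $\sigma_k^{ii,jj}$, and $\sigma_k^{ii}$ according to whether the indices lie in $G$ or $B$; in particular the vector $v=(\sigma_k^{jj})_j$ is dominated by its $B$-components, while the coefficients $2\kappa_i\sigma_k^{ii,jj}$ are quantitatively large on $G\setminus\{i\}$. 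The hypothesis $n<2k$ is essential for these order estimates, essentially ensuring that $G$ is large enough to supply this second source of diagonal positivity.

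The main obstacle is taming the indefinite off-diagonal term $-\kappa_i\sigma_k^{pp,qq}\xi_p\xi_q$, which I plan to split into within-$G$, within-$B$, and cross blocks and bound by weighted Cauchy--Schwarz. Within-$G$ pairs are absorbed by the $2\kappa_i\sigma_k^{ii,jj}\xi_j^2$ diagonal; within-$B$ pairs by the $\sigma_k^{ii}\xi_j^2$ diagonal; cross pairs are handled in two stages, with one stage absorbed by a diagonal and the other by a fraction of the rank-one $K$-term, exploiting the $B$-dominance of $v$. The leftover $-\sigma_k^{ii}\xi_i^2$ is absorbed by extracting the diagonal piece $K\kappa_i(\sigma_k^{ii})^2\xi_i^2$ from the rank-one term, which requires $K\kappa_i\sigma_k^{ii}\geq 1$; the identity $\kappa_i\sigma_k^{ii}=\sigma_k-\sigma_k(\kappa|i)$ combined with a Newton--Maclaurin bound on $\kappa|i\in\Gamma_{k-1}$ shows that $\kappa_i\sigma_k^{ii}$ stays bounded below by a constant depending only on $N_0,N_1,n,k$ once $\kappa_1$ is large. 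Taking $K$ large enough to absorb the accumulated error constants then closes the inequality and produces the explicit lower bound advertised in the abstract.
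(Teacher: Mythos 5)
The statement you are proving is labeled a \emph{Conjecture} in the paper, and the paper does not prove it: the authors explicitly state that it is known only for $k\geq n-2$ (via \cite{RW} and \cite{RWarxiv}, plus the trivial case $k=n$), and the whole point of Theorem \ref{theo2} is that it is \emph{conditional} on this conjecture. What the paper does prove (Theorem \ref{lemm17}) is an identity-based reduction of the left-hand side of \eqref{s3.01} to the expression $\frac{1}{c_{k,K}}[\kappa_i^2\textbf{A}_{k;i}+\sigma_k\textbf{B}_{k;i}+\textbf{C}_{k;i}-c_{k,K}\textbf{D}_{k;i}]$, whose nonnegativity remains open. So your proposal, if it worked, would settle an open problem rather than reproduce an argument from the paper.

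It does not work as written. Your opening algebraic step is correct: from $\sigma_k^{jj}=\sigma_k^{ii}+(\kappa_i-\kappa_j)\sigma_k^{ii,jj}$ one indeed gets $a_j=\sigma_k^{ii}+2\kappa_i\sigma_k^{ii,jj}$, and your lower bound on $\kappa_i\sigma_k^{ii}$ matches Lemma \ref{lemm11} and the remark following it. But from that point on the proposal is a plan, not a proof, and the plan founders exactly where the known proofs for $k=n-1,n-2$ require all their work. The term $K\kappa_i(\sum_j\sigma_k^{jj}\xi_j)^2$ is a rank-one form: it vanishes identically on the $(n-1)$-dimensional hyperplane $\xi\perp(\sigma_k^{jj})_j$, so no choice of $K$, however large, contributes anything there. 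On that hyperplane the entire indefinite form $-\kappa_i\sum_{p\neq q}\sigma_{k-2}(\kappa|pq)\xi_p\xi_q$ must be absorbed by the diagonal terms $\sum_{j\neq i}(\sigma_k^{ii}+2\kappa_i\sigma_{k-2}(\kappa|ij))\xi_j^2-\sigma_k^{ii}\xi_i^2$ alone. Absorbing the $(p,q)$ cross term into the two corresponding diagonal entries by Cauchy--Schwarz would require an inequality of the shape $\kappa_i\sigma_{k-2}^2(\kappa|pq)\lesssim(\sigma_k^{ii}+2\kappa_i\sigma_{k-2}(\kappa|ip))(\sigma_k^{ii}+2\kappa_i\sigma_{k-2}(\kappa|iq))/\kappa_i$, i.e.\ essentially $\sigma_{k-2}^2(\kappa|pq)\lesssim\sigma_{k-2}(\kappa|ip)\,\sigma_{k-2}(\kappa|iq)$, which is false in general for $\kappa\in\Gamma_k$ (the constraint $N_0\leq\sigma_k\leq N_1$ and $n<2k$ must enter in an essential, quantitative way, and your proposal never says how). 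Your ``two-stage'' treatment of the cross block cannot rescue this, since both stages ultimately lean on either a diagonal entry or the $K$-term, and the latter is unavailable orthogonal to $(\sigma_k^{jj})_j$. Until you supply the precise replacement for that failed Cauchy--Schwarz step -- which is the actual content of the conjecture -- there is no proof here.
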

Here, the notations $\sigma_k^{ii}(\kappa),\sigma_k^{jj}(\kappa), \sigma_k^{pp,qq}(\kappa),\sigma_k^{ii,jj}(\kappa)$ mean
$$\sigma_k^{ii}(\kappa)=\frac{\p \sigma_k(\kappa)}{\p \kappa_i},\sigma_k^{jj}(\kappa)=\frac{\p \sigma_k(\kappa)}{\p \kappa_j}, \sigma_k^{pp,qq}(\kappa)=\frac{\p^2 \sigma_k(\kappa)}{\p \kappa_p\p \kappa_q},\sigma_k^{ii,jj}(\kappa)=\frac{\p^2 \sigma_k(\kappa)}{\p \kappa_i\p \kappa_j}.$$

Note that, in \cite{RW} and \cite{RWarxiv}, we have proved the Conjecture for $k=n-1$ and $k=n-2$. If $k=n$, the inequality \eqref{s3.01} is well known. Thus, until to now, the above Conjecture holds when $k\geq n-2$.

The main theorem of this paper is following:
\begin{theo}\label{theo2}
Suppose $M\subset \mathbb R^{n+1}$ is a closed $k$-convex
hypersurface satisfying the curvature equation (\ref{1.1}) with $2k>n$ for some
positive function $\psi(X, \nu)\in C^{2}(\Gamma)$, where $\Gamma$ is
an open neighborhood of the unit normal bundle of $M$ in $\mathbb
R^{n+1} \times \mathbb S^n$. Assume Conjecture \ref{con} holds,
then there is a constant $C$ depending only on $n, k$,
$\|M\|_{C^1}$, $\inf \psi$ and $\|\psi\|_{C^2}$, such that
 \begin{equation}\label{Mc2}
 \max_{X\in M, i=1,\cdots, n} \kappa_i(X) \le C.\end{equation}
\end{theo}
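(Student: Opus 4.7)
The plan is to run the maximum-principle argument on the largest principal curvature $\kappa_1$, coupled with a geometric auxiliary term, and to apply Conjecture \ref{con} at each index where $\kappa_i$ is close to $\kappa_1$. Writing $F^{ii}=\sigma_k^{ii}(\kappa)$, I would consider a test function of the form
$$W(X) = \log \kappa_1(X) + \varphi(X),$$
where $\varphi(X) = \alpha\langle X,\nu\rangle + \frac{A}{2}|X|^2$ (or a similar combination built from the support function and $|X|^2$), with $\alpha, A>0$ large constants to be fixed. Let $X_0\in M$ realize $\max W$; after the standard perturbation to treat multiplicities of $\kappa_1$, choose an orthonormal frame at $X_0$ diagonalizing $h_{ij}$ with $\kappa_1\geq\cdots\geq\kappa_n$. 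The aim is to derive a contradiction from the criticality conditions $W_i=0$ and $\sum_i F^{ii}W_{ii}\leq 0$ under the assumption $\kappa_1(X_0)\gg 1$.

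\textbf{Key computation.} Simons' identity in $\mathbb{R}^{n+1}$, which at the diagonal point reads $h_{11,ii}=h_{ii,11}+\kappa_1\kappa_i(\kappa_i-\kappa_1)$, combined with the twice-differentiated equation
$$\sum_i F^{ii}h_{ii,11} = -\sigma_k^{pp,qq}(\kappa)\,h_{pp,1}h_{qq,1} + \nabla_1\nabla_1\psi(X_0),$$
reduces $\kappa_1\sum_i F^{ii}W_{ii}\leq 0$, after combining with the gradient piece $-\kappa_1^{-2}\sum_i F^{ii}h_{11,i}^2$ from $\log\kappa_1$ and the Hessian of $\varphi$, to a schematic inequality of the form
$$-\sigma_k^{pp,qq}(\kappa)\,h_{pp,1}h_{qq,1} - \frac{1}{\kappa_1}\sum_i F^{ii}h_{11,i}^2 + \sum_i F^{ii}\kappa_i(\kappa_i-\kappa_1)\kappa_1 + A\kappa_1\sum_i F^{ii} \leq \text{controllable error}.$$
The first-order identities $W_i=0$ give $h_{11,i}/\kappa_1 = -(\alpha\kappa_i + A)\langle X,e_i\rangle$, which I would use to re-express $h_{11,i}$ in terms of geometric data when needed.

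\textbf{Applying the conjecture.} Split the indices into $\mathcal{A}=\{i:\kappa_i>\kappa_1-\sqrt{\kappa_1}/n\}$ and $\mathcal{B}=\{1,\ldots,n\}\setminus\mathcal{A}$. For $i\in\mathcal{B}$, the quantitative gap $\kappa_1-\kappa_i\geq\sqrt{\kappa_1}/n$ together with the first-order identity for $h_{11,i}$ suffices to absorb the corresponding piece of $\kappa_1^{-1}F^{ii}h_{11,i}^2$ into the positive Simons contribution. For each $i\in\mathcal{A}$, apply Conjecture \ref{con} with $\xi_j=h_{jj,1}=h_{11,j}$ and sum the resulting inequalities with weights $1/(k\kappa_i)$; using Euler's identity $\sum_j F^{jj}\kappa_j=k\sigma_k$ this converts the two bad terms $-\sigma_k^{pp,qq}h_{pp,1}h_{qq,1}$ and $-\kappa_1^{-1}\sum_{i\in\mathcal{A}}F^{ii}h_{11,i}^2$ into non-negative contributions, at the price of the ``$K$-term'' $K\bigl(\sum_j F^{jj}h_{jj,1}\bigr)^2$ and a residual Simons-type sum $\sum_{i\in\mathcal{A},\,j\neq i}\frac{a_j}{k\kappa_i}h_{jj,1}^2$.

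\textbf{Main obstacle.} The decisive step is absorbing the $K$-term. Differentiating \eqref{1.1} once at $X_0$ yields $\sum_j F^{jj}h_{jj,1}=\nabla_1\psi(X_0)=O(\kappa_1)$, so the $K$-term is $O(K\kappa_1^2)$, and it must be dominated by the positive auxiliary piece $A\kappa_1\sum_i F^{ii}$. The Newton--Maclaurin inequalities together with $n<2k$ and $\sigma_k=\psi\geq\inf\psi>0$ give $\sum_i F^{ii}\geq c\,\kappa_1$ at $X_0$ (this is the structural reason $n<2k$ is needed), so choosing $A\gg K$ allows the absorption. The residual Simons term involving $a_j$ is handled by splitting the $j$-sum along $\mathcal{A}$ and $\mathcal{B}$: for $j\in\mathcal{B}$ the gap applies again, and for $j\in\mathcal{A}$ the bound $|a_j|\leq C F^{jj}$, which follows from $\kappa_i\approx\kappa_j\approx\kappa_1$ and the explicit formula for $\sigma_k^{ii,jj}$, reduces this term to one already handled. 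Combining everything forces $\kappa_1(X_0)\leq C$.
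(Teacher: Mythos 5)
Your overall architecture (maximum principle on the largest curvature, split of indices at the threshold $\kappa_1-\sqrt{\kappa_1}/n$, Conjecture \ref{con} applied on the ``nearly umbilical'' indices) is in the right family, but it is not the paper's argument, and the step you yourself flag as the main obstacle does not go through. The paper's test function is $\phi=\log\log P-N\ln u$ with $P=\sum_l e^{\kappa_l}$ and $u=\langle X,\nu\rangle$, not $\log\kappa_1+\varphi$, and this is not a cosmetic difference: because the differentiated-equation terms enter through $P_{ii}/(P\log P)$, the entire contribution of the concavity inequality --- in particular the term $K(\sum_j\sigma_k^{jj}h_{jjl})^2=K(\nabla_l\psi)^2=O(K\kappa_1^2)$ --- is divided by $\log P\geq\kappa_1$, so the bad $K$-term is only $O(K\kappa_1)$ and is absorbed by $N\sigma_k^{ii}h_{ii}^2\geq N\theta\psi\,\kappa_1$ coming from $-N\ln u$ (Lemma \ref{lemm11}). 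In your scheme there is no such division: the $K$-term stays at order $K\kappa_1^2$, and your proposed dominating term $A\kappa_1\sum_iF^{ii}$ rests on the inequality $\sum_iF^{ii}\geq c\kappa_1$, which is \emph{false} for $k\geq3$. Indeed $\sum_iF^{ii}=(n-k+1)\sigma_{k-1}(\kappa)$, and for $\kappa=(R,\varepsilon,\dots,\varepsilon)\in\Gamma_+$ with $\varepsilon=R^{-1/(k-1)}$ one has $\sigma_k(\kappa)$ bounded between two positive constants while $\sigma_{k-1}(\kappa)\sim R^{1/(k-1)}\ll R=\kappa_1$ (take $n=4$, $k=3$ to stay inside $2k>n$). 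Every honestly available positive term ($\sum F^{ii}\kappa_i^2\geq\theta\psi\kappa_1$ from the support function, $k\psi\kappa_1$ from Simons) is $O(\kappa_1)$, one full power of $\kappa_1$ short. This is precisely the difficulty the $\log\log P$ device is designed to remove, and without it your argument collapses at the decisive step.

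There is a second, structural misalignment in how you invoke the conjecture. You set $\xi_j=h_{jj,1}$ (and the identification $h_{jj,1}=h_{11,j}$ you write is incorrect for $j\neq1$: by Codazzi $h_{jj1}$ has index multiset $\{j,j,1\}$ while $h_{11j}$ has $\{1,1,j\}$). With $\xi_j=h_{jj,1}$ the conjecture's negative term is $-\sigma_k^{ii}h_{ii,1}^2$, which cannot cancel the gradient term $-\kappa_1^{-2}\sum_iF^{ii}h_{11,i}^2$ produced by $\log\kappa_1$; with $\xi_j=h_{11,j}$ the quantities $\sum_jF^{jj}\xi_j$ and $-\sigma_k^{pp,qq}\xi_p\xi_q$ no longer match $\nabla_1\psi$ and the twice-differentiated equation. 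The paper resolves this by applying the inequality once per derivative direction $i$ with $\xi_p=h_{ppi}$ and distinguished index $i$, so that $-\sigma_k^{ii}h_{iii}^2$ exactly offsets the diagonal piece of the $\sigma_k^{ii}P_i^2$ term, while the off-diagonal pieces are matched against $a_j$ through the delicate Lemma \ref{lemm16} (your bound $|a_j|\leq CF^{jj}$ is the easy part of that lemma; the hard part is that the constant must beat the precise coefficient $2(e^{\kappa_j}-e^{\kappa_i})/(\kappa_j-\kappa_i)$ appearing in $D_i$). To repair your proposal you would essentially have to import both the $\log\log P$ test function and the per-direction bookkeeping of $A_i,\dots,E_i$, i.e., the paper's proof.
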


If one would like to derive global curvature estimate, the
inequality \eqref{s3.01} needs to be carefully studied. Thus, the
second result of this paper is to give a relatively explicit lower
bound of the left hand side of \eqref{s3.01}.

For any fixed indices $1\leq a,b,c\leq n$, we always let
$$\sigma_k(\kappa|a)=\frac{\p \sigma_k(\kappa)}{\p \kappa_a}, \sigma_k(\kappa|ab)=\frac{\p^2 \sigma_k(\kappa)}{\p \kappa_a\p \kappa_b},\sigma_k(\kappa|ab)=\frac{\p^3 \sigma_k(\kappa)}{\p \kappa_a\p \kappa_b\p \kappa_c}. $$
Let $\xi=(\xi_1,\xi_2,\cdots,\xi_{n})\in\mathbb{R}^n$ be
an $n$-dimensional vector. Suppose $1\leq i\leq n$ is some given index. We define four
quadratic forms, $\textbf{A}_{k;i},
\textbf{B}_{k;i}, \textbf{C}_{k;i},\textbf{D}_{k;i}$:
 \begin{eqnarray}
 \textbf{A}_{k;i}&=&\sum_{j\neq i}\sigma_{k-2}^2(\kappa|ij)\xi_j^2+\sum_{p\neq q; p,q\neq i}\left[\sigma_{k-2}^2(\kappa|ipq)
-\sigma_{k-1}(\kappa|ipq)\sigma_{k-3}(\kappa|ipq)\right]\xi_p\xi_q\nonumber;\\
\textbf{B}_{k;i}&=&\sum_{j\neq i}2\sigma_{k-2}(\kappa|ij)\xi_j^2-\sum_{p\neq q; p,q\neq i}\sigma_{k-2}(\kappa|ipq)\xi_p\xi_q;\nonumber\\
\textbf{C}_{k;i}&=&\sum_{j\neq i}\left[\kappa_j^2\sigma_{k-2}^2(\kappa|ij)-2\sigma_{k}(\kappa|ij)\sigma_{k-2}(\kappa|ij)\right]\xi_j^2\nonumber\\
&&+\sum_{p,q\neq i, p\neq q}\left[\sigma_{k}(\kappa|ipq)\sigma_{k-2}(\kappa|ipq)-\sigma_{k-1}^2(\kappa|ipq)\right]\xi_p\xi_q;\nonumber\\
\textbf{D}_{k;i}&=&\sum_{j\neq
i}\sigma_{k-1}^2(\kappa|ij)\xi_j^2+\sum_{p\neq q; p,q\neq i,
}\sigma_{k-1}(\kappa|ip)\sigma_{k-1}(\kappa|iq)\xi_p\xi_q.\nonumber
 \end{eqnarray}
For any positive constant $K$, we define
\begin{eqnarray}\label{ckK}
c_{k,K}=\frac{1}{K\kappa_i\sigma_{k-1}(\kappa|i)-1}.
\end{eqnarray}
Indeed, $c_{k,K}$ should be very small, if we let $K$ be sufficiently large, which we will detailed explain in the next section.       
Using the above notations, one has:

\begin{theo}\label{lemm17}
Assume $\kappa=(\kappa_1,\cdots,\kappa_n)\in\Gamma_{k}$,
$\kappa_1$ is the maximum entry of $\kappa$ and $\sigma_{k}(\kappa)$
has a positive lower bound $\sigma_{k}(\kappa)\geq N_0$. Then for
any given index $1\leq i\leq n$, if
$\kappa_i>\kappa_1-\sqrt{\kappa_1}/n$, for any $n$ dimensional vector $\xi=(\xi_1,\cdots,\xi_n)\in\mathbb{R}^n$, we have
\begin{eqnarray}\label{ex1}
&&\kappa_i\Big[K\Big(\sum_j\sigma_{k}^{jj}(\kappa)\xi_j\Big)^2-\sigma_{k}^{pp,qq}(\kappa)\xi_{p}\xi_{q}\Big]-\sigma^{ii}_{k}(\kappa)\xi_{i}^2+\sum_{j\neq
i}a_j\xi_{j}^2\\
&\geq&\frac{1}{c_{k,K}}\left[\kappa_i^2\textbf{A}_{k;i}+\sigma_{k}(\kappa)\textbf{B}_{k;i}+\textbf{C}_{k;i}-c_{k,K}\textbf{D}_{k;i}\right],\nonumber
\end{eqnarray}
when $\kappa_1$ and $K$ both are sufficiently large. Here $a_j$ and $c_{k,K}$ are defined by \eqref{aj} and \eqref{ckK}.
\end{theo}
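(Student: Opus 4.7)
The plan is to expand the left-hand side of \eqref{ex1} as a quadratic form in $\xi$ and identify the coefficient of each $\xi_j^2$ and each cross term $\xi_p\xi_q$ on both sides. I would work throughout with the standard omission identities $\sigma_k^{pp}=\sigma_{k-1}(\kappa|p)$ and $\sigma_k^{pp,qq}=\sigma_{k-2}(\kappa|pq)$ for $p\neq q$, and the Newton-type splitting $\sigma_m(\kappa|S)=\kappa_j\sigma_{m-1}(\kappa|S\cup\{j\})+\sigma_m(\kappa|S\cup\{j\})$ for $j\notin S$, applied repeatedly with the distinguished index~$i$ so as to rewrite every object in terms of $\sigma_*(\kappa|i)$, $\sigma_*(\kappa|ij)$, and $\sigma_*(\kappa|ijp)$.

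The starting observation is that the $\xi_i^2$ coefficient of $\kappa_iK\bigl(\sum_j\sigma_k^{jj}\xi_j\bigr)^2-\sigma_k^{ii}\xi_i^2$ collapses to $\sigma_{k-1}(\kappa|i)\bigl[K\kappa_i\sigma_{k-1}(\kappa|i)-1\bigr]=\sigma_{k-1}(\kappa|i)/c_{k,K}$, which is precisely the $\xi_i^2$ term on the right-hand side, since none of $\textbf{A}_{k;i},\textbf{B}_{k;i},\textbf{C}_{k;i},\textbf{D}_{k;i}$ carries $\xi_i$. This pins down the universal normalization $1/c_{k,K}$. For a diagonal $\xi_j^2$ with $j\neq i$, substituting $\sigma_{k-1}(\kappa|j)=\sigma_{k-1}(\kappa|ij)+\kappa_i\sigma_{k-2}(\kappa|ij)$ into $\kappa_iK\sigma_{k-1}^2(\kappa|j)+\sigma_{k-1}(\kappa|j)+(\kappa_i+\kappa_j)\sigma_{k-2}(\kappa|ij)$ and then using $\sigma_k(\kappa)=\kappa_i\sigma_{k-1}(\kappa|i)+\sigma_k(\kappa|i)$ to extract the $\sigma_k$-prefactor should yield, after grouping by powers of $\kappa_i$, exactly the diagonal $\xi_j^2$ pieces of $\kappa_i^2\textbf{A}_{k;i}$, $\sigma_k(\kappa)\textbf{B}_{k;i}$, $\textbf{C}_{k;i}$ and $-c_{k,K}\textbf{D}_{k;i}$, modulo manifestly non-negative residues.

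The cross terms $\xi_p\xi_q$ require heavier bookkeeping. When $p,q\neq i$, the LHS coefficient $2\kappa_iK\sigma_{k-1}(\kappa|p)\sigma_{k-1}(\kappa|q)-2\kappa_i\sigma_{k-2}(\kappa|pq)$ is processed by applying the splitting twice: first to peel $\kappa_i$ off $\sigma_{k-1}(\kappa|p)$ and $\sigma_{k-1}(\kappa|q)$, then to peel $\kappa_p$ or $\kappa_q$ off $\sigma_{k-2}(\kappa|ip)$, $\sigma_{k-2}(\kappa|iq)$, $\sigma_{k-2}(\kappa|pq)$ to reach $\sigma_*(\kappa|ipq)$-type quantities. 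This expansion delivers exactly the cross contributions of $\textbf{A}_{k;i},\textbf{B}_{k;i},\textbf{C}_{k;i}$ together with the $-\sigma_{k-1}(\kappa|ip)\sigma_{k-1}(\kappa|iq)$ portion of $-c_{k,K}\textbf{D}_{k;i}$. Cross pieces in which one index equals $i$ have no counterpart on the right-hand side and must be shown either to be non-negative on their own or to be absorbable into the already-allocated $\xi_i^2$ and $\xi_j^2$ coefficients by a Cauchy–Schwarz/AM–GM step driven by the $\kappa_iK$-factor in the gradient square.

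The main obstacle is this last certification: showing that all the residues left over by the algebraic regrouping are non-negative. The hypotheses $\sigma_k(\kappa)\geq N_0$ and $\kappa_i>\kappa_1-\sqrt{\kappa_1}/n$ imply $\sigma_{k-1}(\kappa|i)\geq N_0/\kappa_1$ and $\kappa_i\sim\kappa_1$, so $c_{k,K}$ in \eqref{ckK} tends to $0$ uniformly as $K\to\infty$; consequently the $-c_{k,K}\textbf{D}_{k;i}$ piece behaves as a controllable perturbation, and every error proportional to $c_{k,K}$ is harmless. The remaining residues, chiefly the $\xi_i\xi_j$ cross pieces and the corrections produced each time $\kappa_i$ is peeled off a $\sigma_{k-1}(\kappa|\cdot)$, are dominated by the diagonal $\xi_i^2$ coefficient $\sigma_{k-1}(\kappa|i)/c_{k,K}$ and by the main term $\kappa_iK(\sum_j\sigma_k^{jj}\xi_j)^2$ once one exploits the near-maximality of $\kappa_i$ to absorb factors of $\kappa_j$ into $\kappa_i$. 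Once every residue is certified non-negative for sufficiently large $\kappa_1$ and $K$, the coefficient identifications of Steps 1--3 combine to give \eqref{ex1}.
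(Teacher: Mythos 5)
Your outline correctly identifies the two raw ingredients --- the positive $\xi_i^2$ coefficient $\kappa_iK(\sigma_k^{ii})^2-\sigma_k^{ii}=\sigma_{k-1}(\kappa|i)/c_{k,K}$ and the omission identities for $\sigma_{k-1},\sigma_{k-2}$ --- but the order in which you deploy them leaves the essential step unproved. Since the right-hand side of \eqref{ex1} contains no $\xi_i$ at all, the entire $\xi_i$-dependence of the left side, namely the $\xi_i^2$ term \emph{together with} the cross terms $2\kappa_i\xi_i\sum_{j\neq i}(K\sigma_k^{ii}\sigma_k^{jj}-\sigma_k^{ii,jj})\xi_j$, must be eliminated first. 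The proof does this in one discriminant step, bounding the quadratic in $\xi_i$ from below by
\[
-\frac{\kappa_i^2\bigl[\sum_{j\neq i}(K\sigma_{k}^{ii}\sigma_{k}^{jj}-\sigma_{k}^{ii,jj})\xi_j\bigr]^2}{\kappa_iK(\sigma_{k}^{ii})^2-\sigma_{k}^{ii}},
\]
so the $\xi_i^2$ coefficient is consumed entirely by this absorption (your remark that it ``is precisely the $\xi_i^2$ term on the right-hand side'' cannot be right, as the right-hand side has none), and the penalty it leaves on each remaining $\xi_j^2$ and $\xi_p\xi_q$ coefficient is of the \emph{same leading order in $K$} as the term $\kappa_iK\sigma_k^{pp}\sigma_k^{qq}$ it sits next to: the $K^2$ in the numerator against the single power of $K$ in the denominator exactly cancels the leading part of $\kappa_iK\bigl(\sum_{j\neq i}\sigma_k^{jj}\xi_j\bigr)^2$. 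Consequently your Step~2, matching the \emph{raw} coefficient $\kappa_iK\sigma_{k-1}^2(\kappa|j)+a_j$ against the right-hand side ``modulo manifestly non-negative residues,'' cannot close: the residue is precisely this negative order-$K$ penalty, and it can be neither discarded nor ``dominated by the diagonal $\xi_i^2$ coefficient,'' which has already been spent.

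The second, deeper gap is that what you defer to a final ``certification of non-negative residues'' is in fact the whole content of the theorem, and it is not a positivity statement at all but an exact cancellation. After the discriminant step and after clearing the denominator $\kappa_iK(\sigma_k^{ii})^2-\sigma_k^{ii}$, the surviving coefficients collapse \emph{identically} to those of $\kappa_i^2\textbf{A}_{k;i}+\sigma_k\textbf{B}_{k;i}+\textbf{C}_{k;i}-c_{k,K}\textbf{D}_{k;i}$ through five algebraic identities (Lemma~\ref{lemm18}); for instance, on the diagonal one needs
$\kappa_iK\sigma_k^{ii}\sigma_k^{jj}(-\sigma_k^{jj}+2\kappa_i\sigma_k^{ii,jj})-\kappa_i^2(\sigma_k^{ii,jj})^2+a_j(\kappa_iK\sigma_k^{ii}-1)\sigma_k^{ii}
=\tfrac{1}{c_{k,K}}(\sigma_k^{ii}+\sigma_k^{jj})(\kappa_i+\kappa_j)\sigma_{k-2}(\kappa|ij)-\sigma_{k-1}^2(\kappa|ij)$,
with analogues for the off-diagonal entries involving $\sigma_*(\kappa|ipq)$. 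No smallness of $c_{k,K}$ and no near-maximality of $\kappa_i$ can substitute for these identities, because the theorem's right-hand side is not claimed to be non-negative here (that is the role of Conjecture~\ref{con}), so there is no slack anywhere to absorb an unverified error term. As written, your proposal is a plausible description of the bookkeeping but does not constitute a proof.
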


The organization of this paper is as follows. In section 2, we will list the notations and lemmas needed in our proof. Section 3 will prove Theorem \ref{theo2}. Section 4 will prove Theorem \ref{lemm17}.

\section{Preliminary}
The operator $\sigma_k(\kappa)$ for
$\kappa=(\kappa_1,\kappa_2,\cdots,\kappa_n)\in\mathbb{R}^n$ has been
defined by
\begin{align*}
\sigma_k(\kappa)=\sum_{1\leq i_1<\cdots<i_k\leq n}\kappa_{i_1}\cdots
\kappa_{i_k}.
\end{align*}
Korevaar \cite{Kor} has shown that the cone $\Gamma_k$ also can be
characterized as
\begin{eqnarray}\label{Gamma}
&\left\{\kappa\in
\mathbb{R}^n;\sigma_k(\kappa)>0,\frac{\partial\sigma_k(\kappa)}{\partial\kappa_{i_1}}>0,
\cdots,\frac{\partial^k\sigma_k(\kappa)}{\partial\kappa_{i_1}\cdots\partial
\kappa_{i_k}}>0, \text{ for all } 1\leq i_{1}<\cdots<i_{k}\leq
n\right\}.\nonumber
\end{eqnarray}
Suppose $\kappa_1\geq\cdots\geq\kappa_n$, then using the above fact, we have
\begin{equation}\label{Gamma1}
\kappa_k+\kappa_{k+1}+\cdots+\kappa_n> 0\quad {\rm for}\quad
\kappa\in\Gamma_k.
\end{equation}
Thus, if $\kappa\in\Gamma_k$, the number of possible negative entries of $\kappa$ is at most $n-k$.

Let $\kappa(A)$ be
the eigenvalue vector of a matrix $A=(a_{ij})$.
Suppose $F$ is a function defined on the set of symmetric matrices. We let
$$f\left(\kappa (A)\right)=F(A).$$ Thus, we denote
$$F^{pq}=\frac{\p F}{\p a_{pq}}, \text{ and  } F^{pq,rs}=\frac{\p^2 F}{\p a_{pq}\p a_{rs}}.$$
For a local orthonormal frame, if $A$ is diagonal at a point, then
at this point, we have
$$F^{pp}=\frac{\p f}{\p \kappa_p}=f_p, \text{ and }  F^{pp,qq}=\frac{\p^2 f}{\p \kappa_p\p \kappa_q}=f_{pq}.$$

Thus the definition of the $k$-th elementary symmetric function can be extended to symmetric matrices. Suppose $W$ is an $n\times n$ symmetric matrix and $\kappa(W)$ is its eigenvalue vector. We define
$$\sigma_k(W)=\sigma_k(\kappa(W)),$$ which is the summation of the $k$-th principal minors of the matrix $W$.

Now we will list some algebraic identities and properties of
$\sigma_k$. In this paper, we will denote
$(\kappa|a)=(\kappa_1,\cdots,\kappa_{a-1},\kappa_{a+1},\cdots,\kappa_n)$.
For any $1\leq l\leq n$, the notation
$\sigma_l(\kappa|ab\cdots)$ means $\sigma_l((\kappa|ab\cdots))$.
Thus, we define

\par
\noindent (i) $\sigma^{pp}_k(\kappa):=\dfrac{\partial
\sigma_k(\kappa)}{\partial \kappa_p}=\sigma_{k-1}(\kappa|p)$ for any
given index $p=1,\cdots,n$;
\par
\noindent (ii) $\sigma^{pp,qq}_k(\kappa):=\dfrac{\partial^2
\sigma_k(\kappa)}{\partial \kappa_p\partial
\kappa_q}=\sigma_{k-2}(\kappa|pq)$  for any given indices
$p,q=1,\cdots,n$ and $\sigma^{pp,pp}_k(\kappa)=0$.

 Using the above
definitions, we have
\par
\noindent (iii)
$\sigma_k(\kappa)=\kappa_i\sigma_{k-1}(\kappa|i)+\sigma_k(\kappa|i)$
for any given index $i$;
\par
\noindent (iv)
$\dsum_{i=1}^n\kappa_i\sigma_{k-1}(\kappa|i)=k\sigma_k(\kappa)$.

 Thus, for a Codazzi  tensor $W=(w_{ij})$, we have

\noindent (v)
$-\dsum_{p,q,r,s}\sigma^{pq,rs}_k(W)w_{pql}w_{rsl}=\sum_{p,q}\sigma_k^{pp,qq}(W)w_{pql}^2-\sum_{p,q}\sigma^{pp,qq}_k(W)w_{ppl}w_{qql}$,\\
\noindent where $w_{pql}$ means the covariant derivative of $w_{pq}$
with respect to $l$ and $\sigma_k^{pq,rs}(W)=\dfrac{\p^2
\sigma_k(W)}{\p w_{pq}\p w_{rs}}$.  The meaning of Codazzi tensors
can be found in \cite{GRW}.

For $\kappa\in \Gamma_k$, suppose $\kappa_1\geq\cdots\geq\kappa_n$,
then we have
\par
\noindent (vi) $\sigma_{k-1}(\kappa|n)\geq \cdots\geq
\sigma_{k-1}(\kappa|1)>0$.\\
More details about the proof of these formulas can be found in
\cite{HS} and \cite{Wxj}.

 For $\kappa\in\mathbb{R}^n$, we have the
famous Maclaurin's inequality.
\par
\noindent (vii) $
\Big[\dfrac{\sigma_k(\kappa)}{C_n^k}\Big]^{1/k}\leq\Big[\dfrac{\sigma_l(\kappa)}{C_n^l}\Big]^{1/l}
$ for $k\geq l\geq 1, \kappa\in\Gamma_k$. (Maclaurin's inequality)
\par\noindent
Here $C^k_n$ is the combinational number, namely
$C^k_n=\dfrac{n!}{k!(n-k)!}$.

\par
Now, we list several lemmas frequently used in the other sections.
\begin{lemm} \label{Guan}
Assume that $k>l$, $W=(w_{ij})$ is a Codazzi tensor which is in
$\Gamma_k$. Denote $\al=\dfrac{1}{k-l}$.  Then, for $h=1,\cdots, n$
and any $\delta>0$, we have the following inequality
\begin{eqnarray}\label{s2.03}
&&-\sum_{p,q}\sigma_k^{pp,qq}(W)w_{pph}w_{qqh} +\left(1-\al+\dfrac{\al}{\delta}\right)\dfrac{(\sigma_k(W))_h^2}{\sigma_k(W)}\\
&\geq &\sigma_k(W)(\al+1-\delta\al)
\abc{\dfrac{(\sigma_l(W))_h}{\sigma_l(W)}}^2
-\dfrac{\sigma_k}{\sigma_l}(W)\sum_{p,q}\sigma_l^{pp,qq}(W)w_{pph}w_{qqh}.\nonumber
\end{eqnarray}
\end{lemm}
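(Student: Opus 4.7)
The plan is to derive \eqref{s2.03} from the classical concavity of the symmetric matrix function $F(W) = (\sigma_k(W)/\sigma_l(W))^{1/(k-l)}$ on the cone $\Gamma_k$, and then to install the free parameter $\delta$ through a one-line weighted Cauchy--Schwarz identity. Throughout I abbreviate $A_k = \sigma_k(W)$, $A_l = \sigma_l(W)$, $G = A_k/A_l$, so that $F = G^{\alpha}$ with $\alpha = 1/(k-l)$, and I write $u = (A_k)_h/A_k$, $v = (A_l)_h/A_l$.

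First I would invoke the standard fact that $F$ is concave as a function on the symmetric matrices whose eigenvalues lie in $\Gamma_k$. At a point where $W$ is diagonalized by the local frame, testing the matrix Hessian of $F$ against the Codazzi variation $\eta_{pq} = w_{pqh}$ decomposes as $F^{pq,rs}\eta_{pq}\eta_{rs} = \sum_{p,q} F^{pp,qq}w_{pph}w_{qqh} + 2\sum_{p<q}\frac{F^{pp}-F^{qq}}{\kappa_p-\kappa_q}w_{pqh}^2$. The off-diagonal piece is automatically nonpositive from concavity together with the symmetry $F^{pp}\leq F^{qq}$ whenever $\kappa_p\geq\kappa_q$, so concavity of $F$ keeps only the diagonal part,
\begin{equation*}
\sum_{p,q} F^{pp,qq}\,w_{pph}w_{qqh} \leq 0.
\end{equation*}
Expanding $F^{pp,qq} = \alpha(\alpha-1)G^{\alpha-2}G^{pp}G^{qq} + \alpha G^{\alpha-1}G^{pp,qq}$ and using $\sum_p G^{pp}w_{pph} = G_h$, dividing by $\alpha G^{\alpha-1}>0$ yields
\begin{equation*}
\sum_{p,q} G^{pp,qq}w_{pph}w_{qqh} \leq (1-\alpha)\,G^{-1}G_h^2.
\end{equation*}

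Next I would unwind $G = A_k A_l^{-1}$ by direct differentiation: $G^{pp} = A_k^{pp}/A_l - A_k A_l^{pp}/A_l^2$ and similarly for $G^{pp,qq}$. Multiplying the previous display by $-A_l$ and using the identities $(A_k)_h = A_k u$, $(A_l)_h = A_l v$, and $A_l G^{-1}G_h^2 = A_k(u-v)^2$, one collects
\begin{equation*}
-\sum A_k^{pp,qq}w_{pph}w_{qqh} + \frac{A_k}{A_l}\sum A_l^{pp,qq}w_{pph}w_{qqh} \geq A_k\bigl[(\alpha-1)u^2 - 2\alpha uv + (\alpha+1)v^2\bigr].
\end{equation*}

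Finally, to bring in the parameter $\delta$ I would apply the elementary identity
\begin{equation*}
(\alpha-1)u^2 - 2\alpha uv + (\alpha+1)v^2 + \Bigl(1-\alpha+\tfrac{\alpha}{\delta}\Bigr)u^2 - (\alpha+1-\delta\alpha)v^2 = \frac{\alpha}{\delta}(u-\delta v)^2 \geq 0,
\end{equation*}
so moving the $u^2$ and $v^2$ terms to the opposite side of the previous estimate delivers \eqref{s2.03} exactly. The only step that requires genuine care is the first one, where I rely on the concavity of $(\sigma_k/\sigma_l)^{1/(k-l)}$ on $\Gamma_k$ and the fact that the off-diagonal part of its matrix Hessian can be discarded; this is classical (Gårding/Marcus) but worth stating cleanly. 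The remainder is bookkeeping, with the final identity encoding the weighted Cauchy--Schwarz estimate $2\alpha uv \leq (\alpha/\delta)u^2 + \delta\alpha v^2$ that accounts for the $\delta$-dependent coefficients on the two sides of \eqref{s2.03}.
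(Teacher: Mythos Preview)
Your overall approach is correct and is precisely the standard derivation used in the cited references \cite{GLL}, \cite{GRW}: invoke the concavity of $F=(\sigma_k/\sigma_l)^{1/(k-l)}$ on $\Gamma_k$, unwind through $G=\sigma_k/\sigma_l$, and then introduce the parameter $\delta$ via the Cauchy--Schwarz identity. The computations in your second, third and fourth steps all check out line by line; in particular the identity $(\alpha-1)u^2-2\alpha uv+(\alpha+1)v^2+(1-\alpha+\alpha/\delta)u^2-(\alpha+1-\delta\alpha)v^2=\frac{\alpha}{\delta}(u-\delta v)^2$ is exactly what is needed.

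The one point to clean up is the justification in your first step. You obtain $\sum_{p,q}F^{pp,qq}w_{pph}w_{qqh}\leq 0$ by noting that the full matrix Hessian is nonpositive and that the off-diagonal block $2\sum_{p<q}\frac{F^{pp}-F^{qq}}{\kappa_p-\kappa_q}w_{pqh}^2$ is also nonpositive. That inference goes the wrong way: from $A+B\leq 0$ and $B\leq 0$ one cannot conclude $A\leq 0$. The fix is immediate---either test matrix concavity against the \emph{diagonal} variation $\eta_{pq}=w_{pph}\,\delta_{pq}$, so the off-diagonal block is simply absent, or invoke directly the concavity of $f(\kappa)=(\sigma_k(\kappa)/\sigma_l(\kappa))^{\alpha}$ as a function of the eigenvalues on $\Gamma_k$, which gives $\sum_{p,q}f_{pq}\xi_p\xi_q\leq 0$ and $F^{pp,qq}=f_{pq}$ at a diagonal point. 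Note in particular that the Codazzi hypothesis plays no role in \eqref{s2.03}: the inequality is a pointwise statement about $\kappa\in\Gamma_k$ and arbitrary reals $\xi_p=w_{pph}$.
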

The proof can be found in \cite{GLL} and \cite{GRW}. Now we give another Lemma whose proof is
in \cite{LT}.
\begin{lemm} \label{lemm7}
Assume that $\kappa=(\kappa_1,\cdots,\kappa_n)\in\Gamma_{k}$. Then
for any given indices $1\leq i,j\leq n$, if $\kappa_i\geq\kappa_j$, we have
$$
|\sigma_{k-1}(\kappa|ij)|\leq \Theta\sigma_{k-1}(\kappa|j), \text{ where } \Theta=\sqrt{\dfrac{k(n-k)}{n-1}}.
$$
\end{lemm}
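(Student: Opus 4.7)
The plan is to reduce the lemma to a sharper symmetric bound for $\sigma_{k-1}(\kappa|ij)^2$ and then exploit the hypothesis $\kappa_i \geq \kappa_j$ to replace the smaller of $\sigma_{k-1}(\kappa|i), \sigma_{k-1}(\kappa|j)$ with the larger one.

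First, I would use the splitting identities from item (iii) applied to the two $(n-1)$-vectors $(\kappa|i)$ and $(\kappa|j)$:
\begin{align*}
\sigma_{k-1}(\kappa|j) &= \kappa_i\,\sigma_{k-2}(\kappa|ij) + \sigma_{k-1}(\kappa|ij),\\
\sigma_{k-1}(\kappa|i) &= \kappa_j\,\sigma_{k-2}(\kappa|ij) + \sigma_{k-1}(\kappa|ij).
\end{align*}
Since $\sigma_{k-2}(\kappa|ij)>0$ on $\Gamma_k$ (Korevaar characterization), subtracting these gives
$\sigma_{k-1}(\kappa|j)-\sigma_{k-1}(\kappa|i)=(\kappa_i-\kappa_j)\,\sigma_{k-2}(\kappa|ij)\geq 0$,
so under the hypothesis $\kappa_i\geq\kappa_j$ we obtain $\sigma_{k-1}(\kappa|i)\leq\sigma_{k-1}(\kappa|j)$ (equivalently, sort the entries and invoke property (vi)).

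Next, I would establish the sharper symmetric estimate
\[
\sigma_{k-1}(\kappa|ij)^{2} \;\leq\; \frac{k(n-k)}{n-1}\,\sigma_{k-1}(\kappa|i)\,\sigma_{k-1}(\kappa|j).
\]
Substituting the two splittings above, this becomes a quadratic inequality in the triple $\bigl(\sigma_{k-2}(\kappa|ij),\sigma_{k-1}(\kappa|ij),\sigma_{k}(\kappa|ij)\bigr)$, after eliminating $\kappa_i,\kappa_j$ through their symmetric combinations $\kappa_i+\kappa_j$ and $\kappa_i\kappa_j$. It then follows from Newton's inequality for the real-rooted polynomial $\prod_{l\neq i,j}(1+\kappa_l t)$ (for the $(n-2)$-subvector $(\kappa|ij)$), combined with Maclaurin's inequality from item (vii). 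The sharp constant $k(n-k)/(n-1)$ is precisely the binomial-coefficient ratio that arises when these two classical estimates are chained optimally; this is essentially the content of \cite{LT}.

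Combining the two steps,
\[
|\sigma_{k-1}(\kappa|ij)|^{2} \;\leq\; \frac{k(n-k)}{n-1}\,\sigma_{k-1}(\kappa|i)\,\sigma_{k-1}(\kappa|j) \;\leq\; \frac{k(n-k)}{n-1}\,\sigma_{k-1}(\kappa|j)^{2} \;=\; \Theta^{2}\,\sigma_{k-1}(\kappa|j)^{2},
\]
by the monotonicity from the first step, and the lemma follows upon taking square roots.

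The main obstacle is the sharp constant in the second step. Once one expands the square and tries to handle the cross term $\kappa_i\kappa_j\,\sigma_{k-2}(\kappa|ij)^{2}$, the sign of $\sigma_{k-1}(\kappa|ij)$ itself is not a priori fixed (it can be negative for $\kappa\in\Gamma_k$ with mixed signs), so one cannot rely on any obvious monotonicity; instead, the argument must invoke the real-rootedness of the associated polynomial and the Gårding cone condition very precisely to absorb the possibly negative cross terms into the Newton-Maclaurin bound.
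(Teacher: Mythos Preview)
The paper does not supply a proof of this lemma; it simply refers the reader to Lin--Trudinger \cite{LT}. So there is no argument in the paper to compare your proposal against beyond that citation.

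Your step 1 (the monotonicity $\sigma_{k-1}(\kappa|i)\le\sigma_{k-1}(\kappa|j)$ from $\kappa_i\ge\kappa_j$) is correct and cleanly argued. The genuine gap is in step 2. After substituting the splitting identities one has
\[
\sigma_{k-1}(\kappa|i)\,\sigma_{k-1}(\kappa|j)=\kappa_i\kappa_j\,\sigma_{k-2}(\kappa|ij)^2+(\kappa_i+\kappa_j)\,\sigma_{k-2}(\kappa|ij)\,\sigma_{k-1}(\kappa|ij)+\sigma_{k-1}(\kappa|ij)^2,
\]
so the symmetric inequality you want still carries the two parameters $\kappa_i+\kappa_j$ and $\kappa_i\kappa_j$ explicitly; it is \emph{not} a statement about the $(n-2)$-vector $(\kappa|ij)$ alone (note also that $\sigma_k(\kappa|ij)$ does not appear, contrary to your ``triple''). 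Newton's inequality for $\prod_{l\ne i,j}(1+\kappa_l t)$ holds for any real tuple and says nothing about these extra parameters, and Maclaurin by itself does not supply the missing control either. The $\Gamma_k$ hypothesis on the full vector $\kappa$---not merely on $(\kappa|ij)$---is exactly what constrains $\kappa_i,\kappa_j$ enough to make the bound hold; in the toy case $n=3$, $k=2$ your symmetric inequality is literally $\sigma_2(\kappa)\ge0$. Your sketch never indicates where this hypothesis enters. You flag the difficulty yourself (``the main obstacle is the sharp constant''), and in the end you, like the paper, are deferring to \cite{LT} for the decisive estimate; the proposal does not constitute an independent proof.
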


We also have
\begin{lemm}\label{lemm8}
Assume that $\kappa=(\kappa_1,\cdots,\kappa_n)\in \Gamma_k$ and $\kappa_1\geq  \cdots\geq \kappa_n$. Then for any
$0\leq s\leq k\leq n$, we have
\begin{eqnarray}\label{n2.5}
\dfrac{\kappa_1^s\sigma_{k-s}(\kappa)}{\sigma_k(\kappa)}\geq\dfrac{C_n^{k-s}}{C_n^k}.
\end{eqnarray}

\end{lemm}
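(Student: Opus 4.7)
The plan is to derive the inequality from two applications of the Newton--Maclaurin inequality (property (vii)), combined with the elementary fact that the largest entry dominates the arithmetic mean. The base case $s=0$ is the tautology $1 \geq 1$, so I focus on $1 \leq s \leq k$.

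The first step establishes the auxiliary bound
\[
\kappa_1 \;\geq\; \Bigl(\frac{\sigma_k(\kappa)}{C_n^k}\Bigr)^{1/k}.
\]
Since $\kappa_1 = \max_i \kappa_i$, one has $\sigma_1(\kappa)/n = \tfrac{1}{n}\sum_i \kappa_i \leq \kappa_1$; coupling this with the $l=1$ instance of property (vii), which asserts $(\sigma_k(\kappa)/C_n^k)^{1/k} \leq \sigma_1(\kappa)/n$, yields the claim.

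For $1 \leq s \leq k-1$, I apply Newton--Maclaurin once more with $l = k-s$ to obtain
\[
\frac{\sigma_{k-s}(\kappa)}{C_n^{k-s}} \;\geq\; \Bigl(\frac{\sigma_k(\kappa)}{C_n^k}\Bigr)^{(k-s)/k} \;=\; \frac{\sigma_k(\kappa)}{C_n^k}\,\Bigl(\frac{\sigma_k(\kappa)}{C_n^k}\Bigr)^{-s/k} \;\geq\; \frac{\sigma_k(\kappa)}{C_n^k}\,\kappa_1^{-s},
\]
where the final inequality substitutes the auxiliary bound from the first step. Rearranging yields precisely $\kappa_1^s \sigma_{k-s}(\kappa) \geq (C_n^{k-s}/C_n^k)\,\sigma_k(\kappa)$, as required. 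The borderline case $s=k$ is handled by raising the auxiliary bound to the $k$-th power, which gives $\kappa_1^k \geq \sigma_k(\kappa)/C_n^k$ directly, matching the claim since $\sigma_0 \equiv 1$ and $C_n^0 = 1$.

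The argument is essentially a two-line chain of standard inequalities, so I do not anticipate any serious obstacle. The only mild subtlety is the constraint $l \geq 1$ in property (vii), which forces one to treat $s = k$ (where one would formally need $l = 0$) separately; that case is nevertheless immediate from the same auxiliary bound used in the generic range.
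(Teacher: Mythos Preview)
Your proof is correct and is essentially the same argument as the paper's. The paper packages the key auxiliary bound $\kappa_1 \geq (\sigma_k(\kappa)/C_n^k)^{1/k}$ as the statement $\sigma_k(\tilde\kappa)/C_n^k \leq 1$ after rescaling $\tilde\kappa = \kappa/\kappa_1$, and then uses $x^{(k-s)/k} \geq x$ for $0 < x \leq 1$; unwinding the rescaling, this is exactly your substitution $(\sigma_k(\kappa)/C_n^k)^{-s/k} \geq \kappa_1^{-s}$, so the two proofs differ only in presentation.
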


\begin{proof}
Obviously, we have $\kappa_1>0$. Define
$\tilde\kappa=\dfrac{\kappa}{\kappa_1}=\left(1,\cdots,\dfrac{\kappa_n}{\kappa_1}\right)$.
Thus, we have $\dfrac{\sigma_k(\tilde\kappa)}{C_n^k}\leq 1$ and
$\tilde{\kappa}\in\Gamma_k$. By Maclaurin's inequality, we get
\begin{align*}
\dfrac{\sigma_{k-s}(\tilde\kappa)}{C_n^{k-s}}\geq
\Big[\dfrac{\sigma_k(\tilde\kappa)}{C_n^k}\Big]^{\frac{k-s}{k}}\geq
\dfrac{\sigma_k(\tilde\kappa)}{C_n^k},
\end{align*}
which implies
\begin{align*}
\dfrac{\kappa_1^s\sigma_{k-s}(\kappa)}{\sigma_k(\kappa)}=\dfrac{\sigma_{k-s}(\tilde\kappa)}{\sigma_{k}(\tilde\kappa)}\geq\dfrac{C_n^{k-s}}{C_n^k}.
\end{align*}

\end{proof}

Using the above lemma, we can prove
\begin{lemm}\label{lemm9}
Assume that $\kappa=(\kappa_1,\cdots,\kappa_n)\in\Gamma_{k}$ and $\kappa_1\geq  \cdots\geq \kappa_n$. Suppose any given indices $i,j$ satisfy $1\leq i,j\leq n$ and $i\neq j$.

(a) If $\kappa_i\leq 0$, then $-\kappa_i<\dfrac{(n-k)\kappa_1}{k}.$

(b) If $\kappa_i\leq\kappa_j\leq0$, then
$-(\kappa_i+\kappa_j)<\dfrac{2\sigma_{k}(\kappa|ij)}{\sigma_{k-1}(\kappa|ij)}.$
\end{lemm}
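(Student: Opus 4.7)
My plan is to reduce both parts to the same basic observation—removing a non-positive entry from a $\Gamma_k$ vector keeps it in $\Gamma_k$—and then combine this with Lemma~\ref{lemm8}.

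For part (a), the ordering together with $\kappa_i\leq 0$ gives $\kappa_n\leq\kappa_i\leq 0$, so $-\kappa_i\leq -\kappa_n$ and it suffices to bound $-\kappa_n$. I would first establish by induction on $m$ that, for $\kappa\in\Gamma_k$ with $\kappa_n\leq 0$,
\[
\sigma_m(\kappa|n)=\sigma_m(\kappa)+(-\kappa_n)\sigma_{m-1}(\kappa|n)>0,\qquad m=1,\dots,k,
\]
so that $(\kappa|n)$ lies in $\Gamma_k$ as an $(n-1)$-vector, with maximum entry still $\kappa_1$. Then $\sigma_k(\kappa)=\kappa_n\sigma_{k-1}(\kappa|n)+\sigma_k(\kappa|n)>0$ gives $-\kappa_n<\sigma_k(\kappa|n)/\sigma_{k-1}(\kappa|n)$, and Lemma~\ref{lemm8} with $s=1$ applied to $(\kappa|n)\in\Gamma_k\subset\mathbb R^{n-1}$ bounds the right-hand side by $(n-k)\kappa_1/k$.

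For part (b) the key move is a symmetrization trick. Set $\tau=(\kappa_i+\kappa_j)/2\leq 0$ and let $\tilde\kappa$ be the vector obtained from $\kappa$ by replacing both $\kappa_i$ and $\kappa_j$ by $\tau$. Writing $\tilde\kappa=\tfrac12(\kappa+\kappa')$, with $\kappa'$ the vector $\kappa$ with its entries at positions $i$ and $j$ swapped, and using that $\Gamma_k$ is both symmetric and convex, we get $\tilde\kappa\in\Gamma_k$. Applying the ``remove a non-positive entry'' step from part (a) to $\tilde\kappa$ and discarding one of its two $\tau$-coordinates yields $\hat\kappa:=(\kappa|ij,\tau)\in\Gamma_k\subset\mathbb R^{n-1}$. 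Hence $\sigma_k(\hat\kappa)=\sigma_k(\kappa|ij)+\tau\sigma_{k-1}(\kappa|ij)>0$, and property (vi) applied to $\hat\kappa$ gives $\sigma_{k-1}(\kappa|ij)>0$. Multiplying by $2$ and rearranging gives exactly $-(\kappa_i+\kappa_j)<2\sigma_k(\kappa|ij)/\sigma_{k-1}(\kappa|ij)$.

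The main obstacle is designing the reduction in (b). The direct expansion $\sigma_k(\kappa)=\sigma_k(\kappa|ij)+(\kappa_i+\kappa_j)\sigma_{k-1}(\kappa|ij)+\kappa_i\kappa_j\sigma_{k-2}(\kappa|ij)>0$ leaves the non-negative nuisance term $\kappa_i\kappa_j\sigma_{k-2}(\kappa|ij)$ on the wrong side and one cannot simply drop it. The point of first symmetrizing to $(\tau,\tau)$ and then dropping one $\tau$ (permissible because $\tau\leq 0$) is precisely that the single-$\tau$ extension $\sigma_k(\kappa|ij,\tau)$ is \emph{linear} in $\tau$, so the $\sigma_{k-2}$-contribution disappears entirely and the sharp constant $2$ in front of $\sigma_k(\kappa|ij)/\sigma_{k-1}(\kappa|ij)$ emerges.
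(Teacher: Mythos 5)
Your proof is correct. Part (a) is essentially the paper's own argument: the paper deletes $\kappa_i$ directly (noting $\sigma_k(\kappa|i)>0$ forces $(\kappa|i)\in\Gamma_k$, then applying Lemma \ref{lemm8} with $s=1$), whereas you pass through $\kappa_n$ first; this is a trivial variation. Part (b), however, takes a genuinely different route. The paper stays entirely linear in $\kappa_i$: from $\sigma_k(\kappa|j)=\kappa_i\sigma_{k-1}(\kappa|ij)+\sigma_k(\kappa|ij)>0$ it reads off $-\kappa_i<\sigma_k(\kappa|ij)/\sigma_{k-1}(\kappa|ij)$ and then uses the ordering $\kappa_i\le\kappa_j$ to bound $-(\kappa_i+\kappa_j)\le-2\kappa_i$, so the troublesome quadratic term $\kappa_i\kappa_j\sigma_{k-2}(\kappa|ij)$ never appears. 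Your symmetrization $(\kappa_i,\kappa_j)\mapsto(\tau,\tau)$ via convexity and symmetry of $\Gamma_k$, followed by deletion of one $\tau\le 0$, accomplishes the same cancellation at the cost of two extra steps; in exchange it dispenses with the hypothesis $\kappa_i\le\kappa_j$ (only $\kappa_i,\kappa_j\le 0$ is used) and makes the constant $2$ appear for a structural reason rather than from the crude estimate $-\kappa_j\le-\kappa_i$. One small point of care: when you delete one $\tau$ from $\tilde\kappa$, $\tau$ need not be the smallest entry, so you should state the ``remove a non-positive entry'' induction for an arbitrary non-positive coordinate rather than only for $\kappa_n$; the identity $\sigma_m(\tilde\kappa|a)=\sigma_m(\tilde\kappa)-\tilde\kappa_a\sigma_{m-1}(\tilde\kappa|a)$ makes this immediate, so this is a matter of phrasing, not a gap.
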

\begin{proof}
(a) Since $\kappa=(\kappa_1,\cdots,\kappa_n)\in\Gamma_{k}$, by
$$
\sigma_{k}(\kappa)=\kappa_i\sigma_{k-1}(\kappa|i)+\sigma_{k}(\kappa|i)>0,
\text{ and } \kappa_i\leq 0,
$$
we know that $\sigma_{k}(\kappa|i)>0$, which implies
$(\kappa|i)\in\Gamma_k$.
Applying Lemma \ref{lemm8} to $(\kappa|i)$ and using the above inequality, we get
$$
-\kappa_i<\dfrac{\sigma_{k}(\kappa|i)}{\sigma_{k-1}(\kappa|i)}\leq\dfrac{C_{n-1}^{k}\kappa_1}{C_{n-1}^{k-1}}=\frac{(n-k)\kappa_1}{k}.
$$

(b) Same as (a), using $\kappa_j\leq 0$, we know
$\sigma_{k}(\kappa|j)>0$. Thus, it is clear that
$$
\sigma_{k}(\kappa|j)=\kappa_i\sigma_{k-1}(\kappa|ij)+\sigma_{k}(\kappa|ij)>0,
$$
then, rewriting the above inequality, we have $
-\kappa_i<\dfrac{\sigma_{k}(\kappa|ij)}{\sigma_{k-1}(\kappa|ij)}.
$
Since $\kappa_i\leq\kappa_j\leq 0$, the above inequality implies
$-(\kappa_i+\kappa_j)<\dfrac{2\sigma_{k}(\kappa|ij)}{\sigma_{k-1}(\kappa|ij)}.$

\end{proof}

\begin{lemm}\label{lemm10}
Assume that $\kappa=(\kappa_1,\cdots,\kappa_n)\in\Gamma_k$, $1\leq
k\leq n$, and $\kappa_1\geq\cdots\geq\kappa_n$. Then for any $1\leq
s<k$, we have
 $$\sigma_{s}(\kappa)>\kappa_1\cdots\kappa_{s}.$$
\end{lemm}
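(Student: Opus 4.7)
The plan is to prove Lemma \ref{lemm10} by induction on $s$, running from $s = 1$ up to $s = k - 1$. A preliminary observation used throughout is that the hypothesis $\kappa \in \Gamma_k$ combined with the ordering $\kappa_1 \geq \cdots \geq \kappa_n$ forces $\kappa_k > 0$: otherwise $\kappa_j \leq 0$ for every $j \geq k$ by sortedness, contradicting the strict positivity of $\kappa_k + \kappa_{k+1} + \cdots + \kappa_n$ provided by \eqref{Gamma1}. Hence $\kappa_1, \ldots, \kappa_k > 0$, so in particular $\kappa_1 \cdots \kappa_s > 0$ whenever $s \leq k$. For the base case $s = 1$, I would compute
$$\sigma_1(\kappa) - \kappa_1 = \sum_{i \geq 2}\kappa_i = (\kappa_2 + \cdots + \kappa_{k-1}) + (\kappa_k + \cdots + \kappa_n),$$
where the first block is nonnegative by the positivity just noted and the second is strictly positive by \eqref{Gamma1}; hence $\sigma_1(\kappa) > \kappa_1$.

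For the inductive step, I would apply identity (iii) to peel off $\kappa_1$, namely
$$\sigma_s(\kappa) = \kappa_1 \sigma_{s-1}(\kappa|1) + \sigma_s(\kappa|1),$$
and feed the sorted $(n-1)$-vector $(\kappa|1) = (\kappa_2, \ldots, \kappa_n)$ into the induction hypothesis. This requires the hereditary property $(\kappa|1) \in \Gamma_{k-1}$, which I would establish by a parallel induction on $k$: using the inclusion $\Gamma_k \subset \Gamma_{k-1}$, the hereditary induction hypothesis applied to $\kappa \in \Gamma_{k-1}$ gives $\sigma_j(\kappa|1) > 0$ for $j \leq k - 2$, while $\sigma_{k-1}(\kappa|1) > 0$ is exactly property (vi); the base $k = 2$ of the hereditary statement is again just \eqref{Gamma1}. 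Granted $(\kappa|1) \in \Gamma_{k-1}$, the induction hypothesis for $s - 1 < k - 1$ yields $\sigma_{s-1}(\kappa|1) > \kappa_2 \cdots \kappa_s$, and the Garding property of $(\kappa|1)$ together with $s \leq k - 1$ gives $\sigma_s(\kappa|1) > 0$. Multiplying the first estimate by $\kappa_1 > 0$ and adding the second yields
$$\sigma_s(\kappa) > \kappa_1 \kappa_2 \cdots \kappa_s + 0 = \kappa_1 \cdots \kappa_s,$$
completing the inductive step.

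The main subtlety will be organizing the two nested inductions (on $s$ for the main lemma, and on $k$ for the hereditary sub-claim) without circularity. Since the sub-claim's inductive step relies only on $\Gamma_k \subset \Gamma_{k-1}$ and property (vi), and its base case $k = 2$ is \eqref{Gamma1}, the logical structure is clean: neither induction feeds back into the other. If for aesthetic reasons one prefers to avoid invoking the sub-claim explicitly, an equivalent approach is to iterate (iii) into the telescoping identity
$$\sigma_s(\kappa) = \kappa_1 \cdots \kappa_s + \sum_{j=0}^{s-1}\kappa_1 \cdots \kappa_j \cdot \sigma_{s-j}(\kappa_{j+2}, \ldots, \kappa_n),$$
and verify positivity of each tail polynomial $\sigma_{s-j}(\kappa_{j+2}, \ldots, \kappa_n)$ by the iterated hereditary property $(\kappa_{j+2}, \ldots, \kappa_n) \in \Gamma_{k-j-1}$ combined with the inequality $s - j \leq k - j - 1$ equivalent to $s < k$.
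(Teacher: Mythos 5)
Your proof is correct and is essentially the paper's argument: the paper iterates identity (iii) to write $\sigma_s(\kappa)>\kappa_1\sigma_{s-1}(\kappa|1)>\kappa_1\kappa_2\sigma_{s-2}(\kappa|12)>\cdots>\kappa_1\cdots\kappa_s$, discarding the same positive tail terms $\sigma_{s-j}(\kappa|1\cdots j)>0$ that appear in your telescoping identity. The only difference is bookkeeping: you justify the hereditary property $(\kappa|1)\in\Gamma_{k-1}$ by an explicit auxiliary induction using (vi) and \eqref{Gamma1}, whereas the paper simply reads off the needed positivity from $\kappa\in\Gamma_k\subset\Gamma_s$ via Korevaar's characterization of the cone.
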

\begin{proof}
Using $\kappa\in\Gamma_k\subset\Gamma_s$, we have
$$\kappa_1>0, \kappa_{2}>0,\cdots,\kappa_{s}>0,$$ and
$$\sigma_s(\kappa|1)> 0, \sigma_{s-1}(\kappa|12)> 0, \cdots, \sigma_1(\kappa|12\cdots s)> 0 .$$
Using the above inequalities, we get
\begin{align*}
\sigma_{s}(\kappa)=&\kappa_1\sigma_{s-1}(\kappa|1)+\sigma_s(\kappa|1)\\
> &\kappa_1\sigma_{s-1}(\kappa|1)=\kappa_1\kappa_{2}\sigma_{s-2}(\kappa|12)+\kappa_1\sigma_{s-1}(\kappa|12)\\
> &\kappa_1\kappa_{2}\sigma_{s-2}(\kappa|12)=\cdots\\
> &\kappa_1\cdots\kappa_{s}.
\end{align*}
\end{proof}

\begin{lemm}\label{lemm11}
Assume that $\kappa=(\kappa_1,\cdots,\kappa_n)\in\Gamma_k$, $1\leq
k\leq n$, and $\kappa_1\geq\cdots\geq\kappa_n$. For
any given indices $1\leq j\leq k$, there exists a positive constant $\theta$
only depending on $n,k$ such that
\begin{align*}
\sigma_{k}^{jj}(\kappa)\geq\dfrac{\theta\sigma_k(\kappa)}{\kappa_j}.
\end{align*}
Especially, we have $\kappa_1\sigma_{k}(\kappa|1)\geq \theta\sigma_k(\kappa)$.
\end{lemm}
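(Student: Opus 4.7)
My plan is to prove the inequality by induction on $k$, achieving explicitly $\theta=1/\binom{n}{k}$. First I rewrite the claim via $\sigma_k^{jj}(\kappa)=\sigma_{k-1}(\kappa|j)$ in the equivalent form $\kappa_j\,\sigma_{k-1}(\kappa|j)\geq\theta\,\sigma_k(\kappa)$; for $1\leq j\leq k$ both factors on the left are strictly positive (by property (vi) and the fact that $\kappa_1,\dots,\kappa_k>0$). The base case $k=1$ is immediate because only $j=1$ is allowed and $\sigma_1\leq n\kappa_1$ gives $\theta=1/n=1/\binom{n}{1}$.

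For the inductive step I first handle $j=1$. I split the identity $\sigma_k=\kappa_1\sigma_{k-1}(\kappa|1)+\sigma_k(\kappa|1)$ according to the sign of $\sigma_k(\kappa|1)$. If $\sigma_k(\kappa|1)\leq 0$ the bound $\kappa_1\sigma_{k-1}(\kappa|1)\geq\sigma_k$ is automatic. If $\sigma_k(\kappa|1)>0$, combining this with $(\kappa|1)\in\Gamma_{k-1}$ gives $\sigma_l(\kappa|1)>0$ for every $l\leq k$; a direct adaptation of the Lemma~\ref{lemm8} rescaling argument to $(\kappa|1)$, whose maximum is $\mu_1=\kappa_2\leq\kappa_1$, then gives $\sigma_k(\kappa|1)\leq\tfrac{n-k}{k}\kappa_1\sigma_{k-1}(\kappa|1)$. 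Substituting back into the identity yields $\kappa_1\sigma_{k-1}(\kappa|1)\geq\tfrac{k}{n}\sigma_k$, which is stronger than the target $\sigma_k/\binom{n}{k}$.

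For $j\geq 2$, the key identity is
\[
\sigma_{k-1}(\kappa|j)=\sigma_{k-1}(\kappa|1)+(\kappa_1-\kappa_j)\,\sigma_{k-2}(\kappa|1j),
\]
obtained by expanding $\sigma_{k-1}(\kappa|1)$ in its variable $\kappa_j$ and $\sigma_{k-1}(\kappa|j)$ in its variable $\kappa_1$ and subtracting. Multiplying by $\kappa_j$, I bound the first piece via the $j=1$ subcase just proved (giving $\sigma_{k-1}(\kappa|1)\geq \tfrac{k}{n}\sigma_k/\kappa_1$), and the second piece by applying the inductive hypothesis to $(\kappa|1)\in\Gamma_{k-1}$ in dimension $n-1$ at index $j-1\leq k-1$, which corresponds to the entry $\kappa_j$ in the shifted ordering and yields $\kappa_j\sigma_{k-2}(\kappa|1j)\geq\sigma_{k-1}(\kappa|1)/\binom{n-1}{k-1}$. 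Combining these and using the identity $n\binom{n-1}{k-1}=k\binom{n}{k}$, the $\kappa_j$-dependent prefactors decouple and one obtains $\kappa_j\sigma_{k-1}(\kappa|j)\geq\sigma_k/\binom{n}{k}$. The ``especially'' assertion is exactly the $j=1$ instance.

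The main obstacle is the $j=1$, Case~B step: Lemma~\ref{lemm8} is stated for vectors in $\Gamma_k$, whereas $(\kappa|1)$ a priori only lies in $\Gamma_{k-1}$. The rescue is that in Case~B one has $\sigma_l(\kappa|1)>0$ for every $l\leq k$, and Newton's unconditional quadratic inequalities $p_l^2\geq p_{l-1}p_{l+1}$ (with $p_l=\sigma_l/\binom{n-1}{l}$) chain upward into the Maclaurin bound at level $k$, without needing membership in the G\r{a}rding cone. A minor book-keeping point is that removing $\kappa_1$ shifts the decreasing ordering by one, so the target index $j$ becomes $j-1$ in $(\kappa|1)$; this is consistent because $j-1\leq k-1$ is precisely the hypothesis $j\leq k$.
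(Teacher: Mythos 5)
Your proof is correct, but it follows a genuinely different route from the paper's. The paper argues directly for each fixed $j\leq k$: it splits on the sign of $\sigma_k(\kappa|j)$; when $\sigma_k(\kappa|j)\leq 0$ the bound $\sigma_{k}^{jj}(\kappa)\geq \sigma_k(\kappa)/\kappa_j$ is immediate from $\sigma_k=\kappa_j\sigma_{k-1}(\kappa|j)+\sigma_k(\kappa|j)$, and when $\sigma_k(\kappa|j)>0$ it invokes Lemma \ref{lemm10} on $(\kappa|j)\in\Gamma_k$ to get $\sigma_{k-1}(\kappa|j)\geq \kappa_1\cdots\kappa_k/\kappa_j$, then closes with the crude comparison $\theta\sigma_k(\kappa)\leq\kappa_1\cdots\kappa_k$ coming from $|\kappa_l|\leq n\kappa_k$ for $l>k$ (a consequence of \eqref{Gamma1}). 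This yields an unspecified $\theta(n,k)$ in two short steps. You instead run an induction on $k$ (with $n$ varying, so the hypothesis at level $k-1$ is applied to $(\kappa|1)\in\Gamma_{k-1}$ in dimension $n-1$), reduce general $j$ to $j=1$ via the finite-difference identity $\sigma_{k-1}(\kappa|j)=\sigma_{k-1}(\kappa|1)+(\kappa_1-\kappa_j)\sigma_{k-2}(\kappa|1j)$, and handle $j=1$ by the same sign split plus Lemma \ref{lemm8}. The payoff is the explicit, cleaner constant $\theta=1/C_n^k$ (indeed $k/n$ for $j=1$), at the cost of a longer double-induction bookkeeping. All the delicate points check out: in your Case B the positivity $\sigma_l(\kappa|1)>0$ for $l\leq k$ does place $(\kappa|1)$ in $\Gamma_k$ by the paper's very definition of the cone, so Lemma \ref{lemm8} applies verbatim and the appeal to unconditional Newton inequalities is unnecessary (though harmless); the factor $\kappa_1-\kappa_j\geq 0$ justifies substituting the inductive lower bound for $\kappa_j\sigma_{k-2}(\kappa|1j)$; and the identity $nC_{n-1}^{k-1}=kC_n^k$ closes the computation. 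Either argument is acceptable; yours has the added value of an explicit $\theta$.
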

\begin{proof}
We note that $\kappa_j>0$. We divide into two cases to prove our
Lemma.
\par
(a) If we have $\sigma_k(\kappa|j)\leq 0$, we easily see that
\begin{align*}
\sigma_{k}^{jj}(\kappa)=\dfrac{\sigma_k(\kappa)-\sigma_k(\kappa|j)}{\kappa_j}\geq\dfrac{\sigma_k(\kappa)}{\kappa_j}.
\end{align*}
\par
(b) If we have  $\sigma_k(\kappa|j)> 0$, using
$\kappa\in\Gamma_k$, we have $(\kappa|j)\in\Gamma_k$.
Thus, applying Lemma \ref{lemm10} to $(\kappa|j)$, we get
\begin{align*}
\sigma_{k}^{jj}(\kappa)=\sigma_{k-1}(\kappa|j)\geq
\dfrac{\kappa_{1}\cdots\kappa_k}{\kappa_j}.
\end{align*}
In view of \eqref{Gamma1}, for any $j>k$, we have $|\kappa_j|\leq n\kappa_k$. Thus, there exists some constant $\theta$ only depending on $n,k$ such that
$\theta\sigma_k(\kappa)\leq \kappa_1\cdots\kappa_k$. Therefore, we obtain our lemma.

\end{proof}

It is easy to see that if $\kappa_i\geq
\kappa_1-\sqrt{\kappa_1}/n$, we have
$$\kappa_i\sigma_{k-1}(\kappa|i)\geq
\kappa_1\sigma_{k-1}(\kappa|1)/2\geq\theta\sigma_k(\kappa)/2>0,$$
when $\kappa_1$ is sufficiently large. Here $\theta$ is the constant
given in Lemma \ref{lemm11}. Therefore, $c_{k,K}$ defined by \eqref{ckK} is a positive
constant and can be very small if the constant $K$ is sufficiently
large. Thus, throughout the paper, we always assume $K$ is
sufficiently large and then $c_{k,K}$ is positive.

\section{The global  curvature estimates }
\par
In this section, we will derive the global $C^2$-estimates for the
 curvature equation (\ref{1.1}) based on Conjecture \ref{con}, namely, to prove Theorem \ref{theo2}.  

\par
Denote $X,\nu$ to be the position vector and outer normal vector of
$M$. Set $u(X)=\langle X, \nu(X)\rangle$, where
$\langle\cdot,\cdot\rangle$ denotes the inner product of the ambient
space. By the assumption that $M$ is a starshaped hypersurface with
a $C^1$ bound, $u$ is bounded from below and above by two positive
constants. At every point in the hypersurface $M$, choose a local
coordinate frame $\{ \p/(\p x_1),\cdots,\p/(\p x_{n+1})\}$ in
$\mathbb{R}^n$ such that the first $n$ vectors are the local
coordinates of the hypersurface and the last one is the unit outer
normal vector. 

We let
$h_{ij}$  be the second fundamental form of the hypersurface $M$.  The following
geometric formulas are well known (e.g., \cite{GLL}),

\begin{equation}
h_{ij}=\langle\partial_iX,\partial_j\nu\rangle,
\end{equation}
and
\begin{equation}
\begin{array}{rll}
X_{ij}=& -h_{ij}\nu\quad {\rm (Gauss\ formula)}\\
(\nu)_i=&h_{ij}\partial_j\quad {\rm (Weigarten\ equation)}\\
h_{ijk}=& h_{ikj}\quad {\rm (Codazzi\ formula)}\\
R_{ijkl}=&h_{ik}h_{jl}-h_{il}h_{jk}\quad {\rm (Gauss\ equation)},\\
\end{array}
\end{equation}
where $R_{ijkl}$ is the $(4,0)$-Riemannian curvature tensor. We also
have
\begin{equation}\label{4th}
\begin{array}{rll}
h_{ijkl}=& h_{ijlk}+h_{mj}R_{imlk}+h_{im}R_{jmlk}\\
=& h_{klij}+(h_{mj}h_{il}-h_{ml}h_{ij})h_{mk}+(h_{mj}h_{kl}-h_{ml}h_{kj})h_{mi}.\\
\end{array}
\end{equation}

\par
For the function $u$, we consider the following test function
$$
\phi=\log\log P-N\ln u.
$$
Here $N$ is some undetermined constant and the function $P$ is defined by $$P=\dsum_le^{\kappa_l}. $$

We may assume that the maximum of $\phi$ is achieved  at some point
$X_0\in M$. By a proper rotation of the coordinates, we may assume the matrix
$(h_{ij})$ is diagonal at that point, and we can further  assume that
$h_{11}\geq h_{22}\cdots\geq h_{nn}$. Since $\kappa_1,\kappa_2\cdots,\kappa_n$ denote the principal curvatures of $M$, then we have $\kappa_i=h_{ii}$.
\par
Covariant differentiating the function $\phi$ twice  at  $X_0$, we have
\begin{equation}\label{e2.11}
\phi_i = \dfrac{P_i}{P\log P}- N\frac{h_{ii}\langle
X,\p_i\rangle}{u} = 0,
\end{equation}
and
\begin{align*}
\phi_{ii}
=& \frac{P_{ii}}{P\log P}-\frac{P_i^2}{P^2\log P}-\frac{P_i^2}{(P\log P)^2}- \frac{N}{u}\sum_lh_{il,i}\langle \p_l,X \rangle-\frac{N h_{ii}}{u}\nonumber\\
&+Nh_{ii}^2+N\frac{h_{ii}^2\langle X,\p_i\rangle^2}{u^2}.
\end{align*}
Here we have 
\begin{align*}
P_i=\dsum_le^{\kappa_l}h_{lli},\quad
P_{ii}=\sum_le^{\kappa_l}h_{llii}+\sum_le^{\kappa_l}h_{lli}^2+\sum_{\alpha\neq
\beta}\frac{e^{\kappa_{\alpha}}-e^{\kappa_{\beta}}}{\kappa_{\alpha}-\kappa_{\beta}}h_{\alpha\beta
i}^2,
\end{align*}
and at $X_0$,
\begin{align*}
h_{llii}=&h_{ii,ll}+h_{ii}h_{ll}^2-h_{ii}^2h_{ll}.
\end{align*}
Then, we have
\begin{align*}
\phi_{ii}
\geq&\frac{1}{P\log P}\bigg[\sum_le^{\kappa_l}h_{ii,ll}+\sum_le^{\kappa_l}(h_{ii}h_{ll}^2-h_{ii}^2h_{ll})\nonumber\\
&+\sum_le^{\kappa_l}h_{lli}^2+\sum_{\alpha\neq \beta}\frac{e^{\kappa_{\alpha}}-e^{\kappa_{\beta}}}{\kappa_{\alpha}-\kappa_{\beta}}h_{\alpha\beta i}^2-\Big(\frac{1}{P}+\frac{1}{P\log P}\Big)P_i^2\bigg]\nonumber\\
&- \frac{N\sum_lh_{iil}\langle \p_l,X \rangle}{u}-\frac{
Nh_{ii}}{u}+Nh_{ii}^2.
\end{align*}

Contracting with $\sigma_{k}^{ii}$, we have
\begin{align}\label{e2.13}
\sigma_{k}^{ii}\phi_{ii}\geq&\frac{1}{P\log
P}\bigg[\sum_le^{\kappa_l}\sigma_{k}^{ii}h_{ii,ll}+k\psi\sum_le^{\kappa_l}h_{ll}^2-\sigma_{k}^{ii}h_{ii}^2\sum_le^{\kappa_l}h_{ll}
\\
&+\sum_l\sigma_{k}^{ii}e^{\kappa_l}h_{lli}^2+\sum_{\alpha\neq \beta}\sigma_{k}^{ii}\frac{e^{\kappa_{\alpha}}-e^{\kappa_{\beta}}}{\kappa_{\alpha}-\kappa_{\beta}}h_{\alpha\beta i}^2-\Big(\frac{1}{P}+\frac{1}{P\log P}\Big)\sigma_{k}^{ii}P_i^2\bigg]\nonumber\\
&- \frac{N\sum_l\sigma_{k}^{ii}h_{iil}\langle \p_l,X
\rangle}{u}-\frac{ Nk\psi}{u}+N\sigma_{k}^{ii}h_{ii}^2.\nonumber
\end{align}\par
At $X_0$, differentiating the equation (\ref{1.1}) twice, we have
\begin{align}\label{e2.14}
\sigma_{k}^{ii}h_{iil}=d_X\psi(\p_l)+h_{ll}d_{\nu}\psi(\p_l),
\end{align}
and
\begin{align}\label{e2.15}
\sigma_{k}^{ii}h_{iill}+\sigma_{k}^{pq,rs}h_{pql}h_{rsl}\geq
-C-Ch_{11}^2+\sum_sh_{sll}d_{\nu}\psi(\p_s),
\end{align}
where $C$ is some uniform constant.

Inserting  (\ref{e2.15}) into (\ref{e2.13}), we have
\begin{align}
\label{e2.16}
\sigma_{k}^{ii}\phi_{ii}\geq &\frac{1}{P\log P}\bigg[\sum_le^{\kappa_l}(-C-Ch_{11}^2-\sigma_{k}^{pq,rs}h_{pql}h_{rsl})+\sum_{l,s}e^{\kappa_l}h_{sll}d_{\nu}\psi(\p_s)\\
&+k\psi\sum_le^{\kappa_l}h_{ll}^2-\sigma_{k}^{ii}h_{ii}^2\sum_le^{\kappa_l}h_{ll}+\sum_l\sigma_{k}^{ii}e^{\kappa_l}h_{lli}^2\nonumber\\
&+\sum_{\alpha\neq \beta}\sigma_{k}^{ii}\frac{e^{\kappa_{\alpha}}-e^{\kappa_{\beta}}}{\kappa_{\alpha}-\kappa_{\beta}}h_{\alpha\beta i}^2-\Big(\frac{1}{P}+\frac{1}{P\log P}\Big)\sigma_{k}^{ii}P_i^2\bigg]\nonumber\\
&- \frac{N\sum_l\sigma_{k}^{ii}h_{iil}\langle \p_l,X
\rangle}{u}-\dfrac{Nk\psi}{u}+N\sigma_{k}^{ii}h_{ii}^2.\nonumber
\end{align}
By (\ref{e2.11}) and (\ref{e2.14}), we have
\begin{align}\label{e2.17}
\frac{1}{P\log
P}\sum_{l,s}e^{\kappa_l}h_{sll}d_{\nu}\psi(\p_s)-\frac{N}{u}\sum_l\sigma_{k}^{ii}h_{iil}\langle
\p_l, X\rangle =&-\frac{N}{u}\sum_ld_X\psi(\p_l)\langle
X,\p_l\rangle.
\end{align}

Denote
\begin{eqnarray}
&&A_i=e^{\kappa_i}\Big[K(\sigma_{k})_i^2-\sum_{p\neq
q}\sigma_{k}^{pp,qq}h_{ppi}h_{qqi}\Big], \ \  B_i=2\sum_{l\neq
i}\sigma_{k}^{ii,ll}e^{\kappa_l}h_{lli}^2, \nonumber \\
&&C_i=\sigma_{k}^{ii}\sum_le^{\kappa_l}h_{lli}^2; \  \
D_i=2\sum_{l\neq
i}\sigma_{k}^{ll}\frac{e^{\kappa_l}-e^{\kappa_i}}{\kappa_l-\kappa_i}h_{lli}^2,
\ \ E_i=\frac{1+\log P}{P\log P}\sigma_{k}^{ii}P_i^2\nonumber.
\end{eqnarray}
Using
$$
-\dsum_l\sigma_{k}^{pq,rs}h_{pql}h_{rsl}=\dsum_{p\neq
q}\sigma_{k}^{pp,qq}h_{pql}^2-\dsum_{p\neq
q}\sigma_{k}^{pp,qq}h_{ppl}h_{qql},
$$
and (\ref{e2.16}), for any $K>1$, we have
\begin{align}\label{e2.18}
\sigma_{k}^{ii}\phi_{ii}
\geq &\frac{1}{P\log P}\bigg[\sum_le^{\kappa_l}\Big(K(\sigma_{k})_l^2-\dsum_{p\neq q}\sigma_{k}^{pp,qq}h_{ppl}h_{qql}+\dsum_{p\neq q}\sigma_{k}^{pp,qq}h_{pql}^2\Big)\\
&+\sum_l\sigma_{k}^{ii}e^{\kappa_l}h_{lli}^2+\sum_{\alpha\neq
\beta}\sigma_{k}^{ii}\frac{e^{\kappa_{\alpha}}-e^{\kappa_{\beta}}}{\kappa_{\alpha}-\kappa_{\beta}}h_{\alpha\beta
i}^2-\frac{1+\log P}{P\log
P}\sigma_{k}^{ii}P_i^2\nonumber\\
&-CP-CKPh_{11}^2\bigg]+(N-1)\sigma_{k}^{ii}h_{ii}^2\nonumber\\
\geq &\frac{1}{P\log
P}\dsum_i(A_i+B_i+C_i+D_i-E_i)+(N-1)\sigma_{k}^{ii}h_{ii}^2-\dfrac{C+CKh_{11}^2}{\log
P}\nonumber.
\end{align}
\par
We claim that
\begin{equation}\label{e3.16}
A_i+B_i+C_i+D_i-E_i\geq 0
\end{equation}
for all $i=1,\cdots,n$. Therefore by (\ref{e2.18}), we obtain
\begin{align*}
0\geq&\sigma_{k}^{ii}\phi_{ii}
\geq \frac{1}{P\log P}\dsum_i(A_i+B_i+C_i+D_i-E_i)\\
&+(N-1)\sigma_{k}^{ii}h_{ii}^2-\dfrac{C+CKh_{11}^2}{\log P}\nonumber\\
\geq&(N-1)\theta h_{11}-\dfrac{C+CKh_{11}^2}{\log P}\nonumber.
\end{align*}
Here we have used Lemma \ref{lemm11}. Choosing a sufficiently large
positive constant $N$, we obtain an upper bound of $h_ {11}$.

\par
Next, we will divide into two cases to prove our claim
(\ref{e3.16}).

\noindent (I) $\kappa_i\leq \kappa_1-\sqrt{\kappa_1}/n $;

\noindent (II) $\kappa_i>  \kappa_1-\sqrt{\kappa_1}/n$.

At first, we need to prove  the following Lemma.
\par

\par
\begin{lemm} \label{lemm13}    
Assume $\kappa=(\kappa_1,\cdots,\kappa_n)\in\Gamma_k$, $2k>n$, and
$\kappa_1$ is the maximum entry of $\kappa$. Denote
$\delta_k=\dfrac{1}{3k}$, then we have
\begin{align}\label{s3.03}
(2-\delta_k)e^{\kappa_l}\sigma_{k-2}(\kappa|il)+(2-\delta_k)\dfrac{e^{\kappa_l}-e^{\kappa_i}}{\kappa_l-\kappa_i}
\sigma_{k-1}(\kappa|l)\geq
\dfrac{e^{\kappa_l}}{\kappa_1}\sigma_{k-1}(\kappa|i)
\end{align}
for all indices $i,l$ satisfying $l\neq i$, if $\kappa_1$ is
sufficiently large.
\end{lemm}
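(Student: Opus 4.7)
The plan is to exploit a symmetric reformulation of the left-hand side of \eqref{s3.03}. Applying the standard decomposition $\sigma_{k-1}(\kappa|i) = \kappa_l\sigma_{k-2}(\kappa|il) + \sigma_{k-1}(\kappa|il)$ together with the same formula with $i$ and $l$ exchanged, then subtracting, yields the clean identity
\begin{equation*}
\sigma_{k-1}(\kappa|i) - \sigma_{k-1}(\kappa|l) = (\kappa_l - \kappa_i)\sigma_{k-2}(\kappa|il).
\end{equation*}
Substituting $\sigma_{k-1}(\kappa|l) = \sigma_{k-1}(\kappa|i) - (\kappa_l-\kappa_i)\sigma_{k-2}(\kappa|il)$ into the second term on the left-hand side of \eqref{s3.03} and simplifying rewrites that left-hand side as
\begin{equation*}
(2-\delta_k)\left[e^{\kappa_i}\sigma_{k-2}(\kappa|il) + \frac{e^{\kappa_l}-e^{\kappa_i}}{\kappa_l-\kappa_i}\,\sigma_{k-1}(\kappa|i)\right].
\end{equation*}
The right-hand side of \eqref{s3.03} equals $\frac{e^{\kappa_l}}{\kappa_1}\sigma_{k-1}(\kappa|i)$, so the inequality is now a term-by-term comparison against $\sigma_{k-1}(\kappa|i)$.

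By Korevaar's characterization of $\Gamma_k$ recalled at the start of Section 2, both $\sigma_{k-2}(\kappa|il) > 0$ and $\sigma_{k-1}(\kappa|i) > 0$ (we are implicitly in the regime $k \geq 2$, since otherwise $\sigma_{k-2}$ is not defined). The first summand $(2-\delta_k)e^{\kappa_i}\sigma_{k-2}(\kappa|il)$ is therefore already $\geq 0$, and it remains to verify the coefficient inequality
\begin{equation*}
(2-\delta_k)\,\frac{e^{\kappa_l}-e^{\kappa_i}}{\kappa_l-\kappa_i} \geq \frac{e^{\kappa_l}}{\kappa_1},
\end{equation*}
equivalently $(2-\delta_k)\kappa_1\,h(\kappa_l-\kappa_i) \geq 1$, where $h(t) := (1-e^{-t})/t$ (extended continuously by $h(0)=1$). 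When $\kappa_l \leq \kappa_i$, convexity of $e^x$ yields $(e^{\kappa_l}-e^{\kappa_i})/(\kappa_l-\kappa_i) \geq e^{\kappa_l}$ via the Mean Value Theorem, and the coefficient inequality reduces to $(2-\delta_k)\kappa_1 \geq 1$, automatic for $\kappa_1$ large.

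The substance lies in the case $\kappa_l > \kappa_i$. Setting $t = \kappa_l - \kappa_i > 0$, the function $h$ is strictly decreasing on $(0,\infty)$, so the bound is tightest at the largest admissible value of $t$. Combining $\kappa_l \leq \kappa_1$ with Lemma \ref{lemm9}(a) (which bounds the negative part of $\kappa_i$) gives $t \leq \kappa_1 + (n-k)\kappa_1/k = n\kappa_1/k$. The hypothesis $2k > n$, together with the specific choice $\delta_k = 1/(3k)$, yields $(2-\delta_k)k = 2k - 1/3 > n$ (since $n$ is an integer with $n \leq 2k-1$). At the endpoint $t = n\kappa_1/k$ the desired bound becomes
\begin{equation*}
(2-\delta_k)k\bigl(1-e^{-n\kappa_1/k}\bigr) \geq n,
\end{equation*}
which holds for $\kappa_1$ sufficiently large because $n/\bigl((2-\delta_k)k\bigr) < 1$ and $1-e^{-n\kappa_1/k}\to 1$. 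The principal obstacle is exactly this endpoint estimate: the $1/t$ decay of $h(t)$ for large $t$ would defeat any naive bound, and the proof hinges decisively on the cone-induced bound $t \leq n\kappa_1/k$ together with the strict margin $2k - 1/3 > n$ afforded by the specific choice $\delta_k = 1/(3k)$.
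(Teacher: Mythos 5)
Your proof is correct and follows essentially the same route as the paper: the same identity $\sigma_{k-1}(\kappa|i)-\sigma_{k-1}(\kappa|l)=(\kappa_l-\kappa_i)\sigma_{k-2}(\kappa|il)$ is used to symmetrize the left-hand side, the positive term $e^{\kappa_i}\sigma_{k-2}(\kappa|il)$ is discarded, and everything reduces to the coefficient inequality $(2-\delta_k)\,\frac{e^{\kappa_l}-e^{\kappa_i}}{\kappa_l-\kappa_i}\geq \frac{e^{\kappa_l}}{\kappa_1}$. Your verification of that inequality via the monotonicity of $h(t)=(1-e^{-t})/t$ and the single endpoint $t\leq n\kappa_1/k$ is a clean consolidation of the paper's four-case analysis (which separately treats $\kappa_l\leq\kappa_i$, $0<t\leq 1$, $1<t\leq\kappa_1$, and $t>\kappa_1$), resting on exactly the same ingredients: Lemma \ref{lemm9}(a) and the margin $2k-\tfrac{1}{3}>n$.
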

\begin{proof}  Obviously we have the following identity,
$$
\sigma_{k-1}(\kappa|l)=\sigma_{k-1}(\kappa|i)+(\kappa_i-\kappa_l)\sigma_{k-2}(\kappa|il).
$$
Multiplying $\dfrac{e^{\kappa_l}-e^{\kappa_i}}{\kappa_l-\kappa_i}$
in  both sides of the above identity, we have
\begin{align}\label{s3.04}
e^{\kappa_l}\sigma_{k-2}(\kappa|il)+\dfrac{e^{\kappa_l}-e^{\kappa_i}}{\kappa_l-\kappa_i}
\sigma_{k-1}(\kappa|l)
=e^{\kappa_i}\sigma_{k-2}(\kappa|il)+\dfrac{e^{\kappa_l}-e^{\kappa_i}}{\kappa_l-\kappa_i}
\sigma_{k-1}(\kappa|i).
\end{align}
Using (\ref{s3.04}), in order to prove \eqref{s3.03}, we only need
to show
\begin{eqnarray}\label{n3.5}
(2-\delta_k)\dfrac{e^{\kappa_l}-e^{\kappa_i}}{\kappa_l-\kappa_i}
\geq\dfrac{e^{\kappa_l}}{\kappa_1},
\end{eqnarray}
which we will divide into four cases to prove.
\par
\noindent Case (a): Suppose $\kappa_l\leq \kappa_i$. We have
\begin{align*}
\dfrac{e^{\kappa_l}-e^{\kappa_i}}{\kappa_l-\kappa_i}
=e^{\kappa_l}\dfrac{e^{\kappa_i-\kappa_l}-1}{\kappa_i-\kappa_l} \geq
e^{\kappa_l}\geq\dfrac{e^{\kappa_l}}{\kappa_1},
\end{align*}
if $\kappa_1$ is sufficiently large. Here we have used the
inequality $e^t>1+t$ for $t>0$.
\par
\noindent Case (b): Suppose $0<\kappa_l-\kappa_i\leq 1$.  By the
mean value theorem, there exists some constant $\xi$ satisfying
$\kappa_i<\xi<\kappa_l$, such that
\begin{align*}
\dfrac{e^{\kappa_l}-e^{\kappa_i}}{\kappa_l-\kappa_i} =e^{\xi}\geq
e^{\kappa_i}\geq e^{\kappa_l-1} \geq\dfrac{e^{\kappa_l}}{\kappa_1},
\end{align*}
if $\kappa_1$ is sufficiently large.
\par
\noindent Case (c): Suppose $1<\kappa_l-\kappa_i\leq \kappa_1$. We
have
\begin{align*}
(2-\delta_k)\dfrac{e^{\kappa_l}-e^{\kappa_i}}{\kappa_l-\kappa_i}
  \geq&
(2-\delta_k)e^{\kappa_l}\dfrac{1-e^{-1}}{\kappa_l-\kappa_i}
 \geq
(2-\delta_k)(1-e^{-1})\dfrac{e^{\kappa_l}}{\kappa_1}
 \geq\dfrac{e^{\kappa_l}}{\kappa_1}.
\end{align*}
\par
\noindent Case (d): Suppose $\kappa_l-\kappa_i>\kappa_1$. In this
case, our condition implies  $\kappa_i<0$. By Lemma \ref{lemm9} and
$2k>n$, we know that
$-\kappa_i<\dfrac{n-k}{k}\kappa_1\leq\dfrac{k-1}{k}\kappa_1$, then
we have
$$\kappa_l-\kappa_i\leq\kappa_1-\kappa_i<\dfrac{2k-1}{k}\kappa_1.$$
Thus, in this case,
\begin{align*}
(2-\delta_k)\dfrac{e^{\kappa_l}-e^{\kappa_i}}{\kappa_l-\kappa_i}
  \geq&
(2-\delta_k)e^{\kappa_l}\dfrac{1-e^{-\kappa_1}}{\kappa_l-\kappa_i}
 \geq
\dfrac{(2-\delta_k)(1-e^{-\kappa_1})}{(2k-1)/k}\dfrac{e^{\kappa_l}}{\kappa_1}.
\end{align*}
We obviously have
$$\dfrac{(2-\delta_k)}{(2k-1)/k}> 1+\dfrac{1}{3k}.$$
Thus we get
 $\dfrac{(2-\delta_k)(1-e^{-\kappa_1})}{(2k-1)/k}\geq 1$ if
$\kappa_1$ is sufficiently large, which gives the desired
inequality.
\end{proof}

Next lemma will handle the case (I).

\begin{lemm}\label{le2}
Assume $\kappa\in\Gamma_k$, $2k>n$, and $\kappa_1$ is the maximum
entry of $\kappa$. For given index $1\leq i\leq n$, if $\kappa_i\leq
\kappa_1-\sqrt{\kappa_1}/n$ then we have
\begin{align*}
A_i+B_i+C_i+D_i-E_i\geq0,
\end{align*}
when the constant $K$ and the biggest eigenvalue $\kappa_1$ both are sufficiently large.
\end{lemm}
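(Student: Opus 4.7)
The key observation for case (I) is that the hypothesis $\kappa_i \leq \kappa_1 - \sqrt{\kappa_1}/n$ forces $i \neq 1$ and makes $e^{\kappa_i}/e^{\kappa_1} \leq e^{-\sqrt{\kappa_1}/n}$ subexponentially small in $\kappa_1$. The plan has three parts: first, show $A_i \geq 0$ by exploiting the concavity of $\sigma_k^{1/k}$ via Lemma \ref{Guan}; second, bound $B_i + D_i$ from below using Lemma \ref{lemm13}; third, handle $C_i - E_i$ by a weighted Cauchy--Schwarz. Since all three contributions are separately controlled, $A_i$ needs no cancellation from the $e^{\kappa_i}$ smallness--which is instead reserved for the bookkeeping in case (II).

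For the first part, I specialize Lemma \ref{Guan} to $l = 0$ (so $\sigma_0 \equiv 1$, $(\sigma_0)_h = 0$, $\sigma_0^{pp,qq} = 0$) with $\alpha = 1/k$ and let $\delta \to \infty$, obtaining the Newton-type inequality
\[
\sum_{p\neq q}\sigma_k^{pp,qq} h_{ppi}h_{qqi} \leq \frac{k-1}{k\,\sigma_k}(\sigma_k)_i^2.
\]
Since $\sigma_k = \psi \geq \inf\psi > 0$, this yields
\[
A_i = e^{\kappa_i}\Big[K(\sigma_k)_i^2 - \sum_{p\neq q}\sigma_k^{pp,qq}h_{ppi}h_{qqi}\Big] \geq e^{\kappa_i}(\sigma_k)_i^2\Big[K - \frac{k-1}{k\inf\psi}\Big] \geq 0
\]
once $K$ is sufficiently large. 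For the second part, I multiply the pointwise inequality of Lemma \ref{lemm13} by $h_{lli}^2$ and sum over $l \neq i$; using $\sigma_k^{ii} = \sigma_{k-1}(\kappa|i)$, this gives
\[
(B_i + D_i)\Big(1 - \tfrac{\delta_k}{2}\Big) \geq \frac{\sigma_k^{ii}}{\kappa_1}\sum_{l\neq i}e^{\kappa_l}h_{lli}^2.
\]

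For the third part, I decompose $P_i = e^{\kappa_i}h_{iii} + \sum_{l\neq i}e^{\kappa_l}h_{lli}$ and apply $(a+b)^2 \leq (1+\tau)a^2 + (1+\tau^{-1})b^2$ with the weight $\tau = e^{\sqrt{\kappa_1}/(2n)}$, followed by $(\sum_{l\neq i}e^{\kappa_l}h_{lli})^2 \leq P\sum_{l\neq i}e^{\kappa_l}h_{lli}^2$. The choice of $\tau$ makes $(1+\tau)e^{\kappa_i}/P \leq 2e^{-\sqrt{\kappa_1}/(2n)} = o(1)$, so
\[
C_i - E_i \geq (1-o(1))\sigma_k^{ii}e^{\kappa_i}h_{iii}^2 - \Big(\frac{1}{\log P} + o\big(\tfrac{1}{\log P}\big)\Big)\sigma_k^{ii}\sum_{l\neq i}e^{\kappa_l}h_{lli}^2.
\]
Using $\log P \geq \kappa_1$, summing the three bounds on $A_i$, $B_i + D_i$, and $C_i - E_i$ leaves a residual coefficient bounded below by
\[
\frac{1}{(1-\delta_k/2)\kappa_1} - \frac{1+o(1)}{\log P} \geq \frac{\delta_k/2}{(1-\delta_k/2)\kappa_1} - o\big(\tfrac{1}{\kappa_1}\big)
\]
in front of $\sigma_k^{ii}\sum_{l\neq i}e^{\kappa_l}h_{lli}^2$, which is strictly positive for $\kappa_1$ large.

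The main technical obstacle is the coordinated bookkeeping of the smallness parameters $1/\log P$, $e^{-\sqrt{\kappa_1}/(2n)}$, and the $\delta_k$-gain from Lemma \ref{lemm13}: each is arbitrarily small individually, but they must be arranged so that the residual coefficient does not vanish. The correct order of quantifiers is to fix $K$ large enough for $A_i \geq 0$, then choose $\tau = e^{\sqrt{\kappa_1}/(2n)}$ as a function of $\kappa_1$, and finally send $\kappa_1 \to \infty$.
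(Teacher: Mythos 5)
Your proof is correct, and it takes a genuinely different route from the paper at the one point where the lemma is delicate, namely the cross terms hidden in $E_i$. The paper expands $P_i^2$ by the unweighted Cauchy--Schwarz inequality \eqref{e2.19}, absorbs the cross terms with $l\neq 1,i$ via \eqref{CauS}, and is then forced to deal with the $l=1$ cross term $-2\frac{1+\log P}{P\log P}e^{\kappa_i+\kappa_1}\sigma_k^{ii}h_{iii}h_{11i}$, whose coefficient is of the same order as the good diagonal term $e^{\kappa_i}\sigma_k^{ii}h_{iii}^2$; to kill it the paper retains only a $(2-\delta_k)$-fraction of Lemma \ref{lemm13} in \eqref{e2.21}, keeps the leftover $\frac{1}{3k}$ of the $l=1$ term, rewrites it via \eqref{e2.24}--\eqref{e2.25} as $\frac{1}{3k}\frac{e^{\kappa_1-\kappa_i}-1}{\kappa_1-\kappa_i}h_{11i}^2$, and closes with a discriminant argument on the quadratic form in $(h_{iii},h_{11i})$, using that $\frac{e^{s}-1}{s}\to\infty$ for $s=\kappa_1-\kappa_i\geq\sqrt{\kappa_1}/n$. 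You instead eliminate all cross terms at once by the weighted inequality $(a+b)^2\leq(1+\tau)a^2+(1+\tau^{-1})b^2$ with the $\kappa_1$-dependent weight $\tau=e^{\sqrt{\kappa_1}/(2n)}$, pushing the burden onto the $h_{iii}^2$ side where the hypothesis supplies exponential room ($e^{\kappa_i}/P\leq e^{-\sqrt{\kappa_1}/n}$) while costing only $\tau^{-1}=o(1/\kappa_1)$ on the off-diagonal side; you then spend the \emph{entire} $\delta_k$-margin of Lemma \ref{lemm13} on the gap $\frac{1}{(1-\delta_k/2)\kappa_1}-\frac{1}{\log P}\geq\frac{\delta_k/2}{(1-\delta_k/2)\kappa_1}$, rather than on the $l=1$ term alone. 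Both arguments draw on exactly the same two sources of slack (the gap $\kappa_1-\kappa_i\geq\sqrt{\kappa_1}/n$ and the $\delta_k$ surplus in \eqref{s3.03}), but your bookkeeping is cleaner and avoids the discriminant computation entirely; your derivation of $A_i\geq0$ from Lemma \ref{Guan} with $l=0$ is the same Newton--Maclaurin concavity the paper imports as Lemma 2.2 of the reference, using $\sigma_k=\psi\geq\inf\psi>0$. The only points worth making explicit in a written version are that Lemma \ref{lemm13} has a nonnegative right-hand side (so $B_i+D_i\geq0$ and one may divide by $1-\delta_k/2$), and that all the $o(\cdot)$ quantities depend only on $n,k,\kappa_1$ and not on the derivatives $h_{lli}$, so the limits are uniform.
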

\begin{proof}  Firstly, by Lemma 2.2 of \cite{GRW}, we have $A_i> 0$,
if the constant $K$ is sufficiently large.
By the Cauchy-Schwarz inequality, we have
\begin{align}\label{e2.19}
P_i^2= & e^{2\kappa_i}h_{iii}^2 +2 \dsum_{l\neq
i}e^{\kappa_i+\kappa_l}h_{iii}h_{lli}
+ \Big(\dsum_{l\neq i}e^{\kappa_l}h_{lli}\Big)^2\\
\leq & e^{2\kappa_i}h_{iii}^2 +2 \dsum_{l\neq
i}e^{\kappa_i+\kappa_l}h_{iii}h_{lli}+ (P-e^{\kappa_i})\dsum_{l\neq
i}e^{\kappa_l}h_{lli}^2. \nonumber
\end{align}
Using (\ref{e2.19}),  we have
\begin{align}\label{e2.20}
&B_i+C_i+D_i-E_i\\
\geq&2\dsum_{l\neq i}e^{\kappa_l}\sigma_{k}^{ll,ii}h_{lli}^2 +
2\dsum_{l\neq
i}\dfrac{e^{\kappa_l}-e^{\kappa_i}}{\kappa_l-\kappa_i}\sigma_{k}^{ll}h_{lli}^2
-\dfrac{1}{\log P}\dsum_{l\neq
i}e^{\kappa_l}\sigma_{k}^{ii}h_{lli}^2\nonumber\\
&+\frac{1+\log P}{P\log P}\dsum_{l\neq
i}e^{\kappa_l+\kappa_i}\sigma_{k}^{ii}h_{lli}^2+e^{\kappa_i}\sigma_{k}^{ii}h_{iii}^2 \nonumber\\
&-\frac{1+\log P}{P\log P}e^{2\kappa_i}\sigma_{k}^{ii}h_{iii}^2
-2\frac{1+\log P}{P\log P}\dsum_{l\neq
i}e^{\kappa_i+\kappa_l}\sigma_{k}^{ii}h_{iii}h_{lli} \nonumber.
\end{align}
Note that $\log P>\kappa_1$, using Lemma \ref{lemm13}, we have
\begin{align}\label{e2.21}
(2-\dfrac{1}{3k})\dsum_{l\neq
i}e^{\kappa_l}\sigma_{k}^{ll,ii}h_{lli}^2 +
(2-\dfrac{1}{3k})\dsum_{l\neq
i}\dfrac{e^{\kappa_l}-e^{\kappa_i}}{\kappa_l-\kappa_i}\sigma_{k}^{ll}h_{lli}^2
-\dfrac{1}{\log P}\dsum_{l\neq
i}e^{\kappa_l}\sigma_{k}^{ii}h_{lli}^2\geq 0.
\end{align}
On the other hand, it is clear that
\begin{align}\label{CauS}
&\dsum_{l\neq i,1}e^{\kappa_l+\kappa_i}\sigma_{k}^{ii}h_{lli}^2
-2\dsum_{l\neq
i,1}e^{\kappa_i+\kappa_l}\sigma_{k}^{ii}h_{iii}h_{lli}\geq-\dsum_{l\neq
i,1}e^{\kappa_l+\kappa_i}\sigma_{k}^{ii}h_{iii}^2.
\end{align}
Then, using the above two inequalities, \eqref{e2.20} becomes
\begin{align}\label{e2.23}
&B_i+C_i+D_i-E_i\\
\geq&\frac{1+\log P}{P\log
P}e^{\kappa_1+\kappa_i}\sigma_{k}^{ii}h_{11i}^2+e^{\kappa_i}\sigma_{k}^{ii}h_{iii}^2 \nonumber\\
&-\frac{1+\log P}{P\log P}\dsum_{l\neq
1}e^{\kappa_l+\kappa_i}\sigma_{k}^{ii}h_{iii}^2
-2\frac{1+\log P}{P\log P}e^{\kappa_i+\kappa_1}\sigma_{k}^{ii}h_{iii}h_{11i} \nonumber\\
&+\dfrac{1}{3k}e^{\kappa_1}\sigma_{k}^{11,ii}h_{11i}^2 +
\dfrac{1}{3k}\dfrac{e^{\kappa_1}-e^{\kappa_i}}{\kappa_1-\kappa_i}\sigma_{k}^{11}h_{11i}^2.
\nonumber
\end{align}

A straightforward calculation shows that
\begin{align}
e^{\kappa_i}\sigma_{k}^{ii}h_{iii}^2 -\frac{1+\log P}{P\log
P}\dsum_{l\neq 1}e^{\kappa_l+\kappa_i}\sigma_{k}^{ii}h_{iii}^2\geq&
\Big(\frac{e^{\kappa_1}}{P}-\frac{1}{\log
P}\Big)e^{\kappa_i}\sigma_{k}^{ii}h_{iii}^2 \nonumber\\
\geq& \frac{1}{n+1}e^{\kappa_i}\sigma_{k}^{ii}h_{iii}^2\nonumber
\end{align}
and
\begin{align}
-2\frac{1+\log P}{P\log
P}e^{\kappa_i+\kappa_1}\sigma_{k}^{ii}|h_{iii}h_{11i}| \geq &
-\frac{3}{P}e^{\kappa_i+\kappa_1}\sigma_{k}^{ii}|h_{iii}h_{11i}|\nonumber\\
\geq& -3e^{\kappa_i}\sigma_{k}^{ii}|h_{iii}h_{11i}|,\nonumber
\end{align}
hold at the same time if  $\kappa_1$ is sufficiently lagre. We let
$l=1$ in \eqref{s3.04} and we have
\begin{align}\label{e2.24}
e^{\kappa_1}\sigma_{k}^{11,ii}h_{11i}^2 +
\dfrac{e^{\kappa_1}-e^{\kappa_i}}{\kappa_1-\kappa_i}\sigma_{k}^{11}h_{11i}^2
=e^{\kappa_i}\sigma_{k}^{11,ii}h_{11i}^2 +
\dfrac{e^{\kappa_1}-e^{\kappa_i}}{\kappa_1-\kappa_i}\sigma_{k}^{ii}h_{11i}^2.
\end{align}
By Taylor's Theorem, we also have
\begin{align}\label{e2.25}
\dfrac{e^{\kappa_1}-e^{\kappa_i}}{\kappa_1-\kappa_i}\sigma_{k}^{ii}h_{11i}^2
=e^{\kappa_i}\dsum_{m\geq
1}\dfrac{(\kappa_1-\kappa_i)^{m-1}}{m!}\sigma_{k}^{ii}h_{11i}^2.
\end{align}
Combining the previous four formulas and using (\ref{e2.23}), we obtain
\begin{align}
&B_i+C_i+D_i-E_i\nonumber\\
\geq&e^{\kappa_i}\sigma_{k}^{ii}\Big[\frac{1}{n+1}h_{iii}^2-3|h_{iii}h_{11i}|
+\frac{1}{3k}\dsum_{m\geq
1}\dfrac{(\kappa_1-\kappa_i)^{m-1}}{m!}h_{11i}^2\Big] \nonumber\\
\geq& 0,\nonumber
\end{align}
if $\kappa_1$ is sufficiently large.
\end{proof}

For the case (II), we first prove that
\begin{lemm}\label{lemm16}
Assume $\kappa=(\kappa_1,\kappa_2,\cdots,\kappa_n)\in\Gamma_k$,
$n<2k$, $\kappa_1$ is the maximum entry of $\kappa$ and
$\sigma_k(\kappa)$ has a lower bound $\sigma_k(\kappa)\geq N_0>0$.
Then for any given indices $i,j$ satisfying $1\leq i,j\leq n$ and
$j\neq i$, if
 $\kappa_i>\kappa_1-\sqrt{\kappa_1}/n$, we have
\begin{align}\label{a3.6}
\dfrac{2\kappa_i(1-e^{\kappa_j-\kappa_i})}{\kappa_i-\kappa_j}\sigma_{k}^{jj}(\kappa)\geq
\sigma_{k}^{jj}(\kappa)+(\kappa_i+\kappa_j)\sigma_k^{ii,jj}(\kappa),
\end{align}
when $\kappa_1$ is sufficiently large.
\end{lemm}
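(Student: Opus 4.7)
The plan is to reduce \eqref{a3.6} to an equivalent form in which only $\sigma_{k-1}(\kappa|i)$ and $\sigma_{k-1}(\kappa|j)$ appear, and then verify the reduced inequality by case analysis on the gap $t:=\kappa_i-\kappa_j$. The key algebraic identity for the reduction is
\[
\sigma_{k-2}(\kappa|ij)=\frac{\sigma_{k-1}(\kappa|j)-\sigma_{k-1}(\kappa|i)}{\kappa_i-\kappa_j},
\]
valid when $\kappa_i\neq\kappa_j$. Substituting this and $\kappa_i+\kappa_j=2\kappa_i-t$ into \eqref{a3.6} and, in the main case $t>0$, clearing the denominator $t$ without flipping the direction, a short computation shows that \eqref{a3.6} is equivalent to
\[
(2\kappa_i-t)\sigma_{k-1}(\kappa|i)\geq 2\kappa_i e^{-t}\sigma_{k-1}(\kappa|j).\qquad(\star)
\]

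Next I would locate $t$ and $\kappa_i+\kappa_j$ in a controlled window. From $\kappa_i>\kappa_1-\sqrt{\kappa_1}/n$ and $\kappa_j\leq\kappa_1$ we get $t\geq -\sqrt{\kappa_1}/n$. When $\kappa_j\leq 0$, Lemma \ref{lemm9}(a) together with $2k>n$ yields $-\kappa_j<(n-k)\kappa_1/k$, so
\[
t\leq \frac{n}{k}\kappa_1,\qquad \kappa_i+\kappa_j\geq \frac{2k-n}{k}\kappa_1-\frac{\sqrt{\kappa_1}}{n}>0
\]
for $\kappa_1$ large; the case $\kappa_j>0$ is even more favorable. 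Thus $\kappa_i+\kappa_j$ is positive and of order $\kappa_1$ throughout.

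I would then verify $(\star)$, and its sign-flipped analogue for $t<0$, in three sub-ranges. For $t\leq 0$, $|t|\leq\sqrt{\kappa_1}/n$ is small and a direct Taylor expansion about $t=0$, where the two sides of $(\star)$ coincide, closes the estimate. For large positive $t$, say $t\geq C\log\kappa_1$, the factor $e^{-t}$ is negligible while $(2\kappa_i-t)\sigma_{k-1}(\kappa|i)$ is of order $\kappa_1\sigma_{k-1}(\kappa|i)>0$, and $(\star)$ is immediate. For bounded positive $t$, using $\sigma_{k-1}(\kappa|j)=\sigma_{k-1}(\kappa|i)+t\sigma_{k-2}(\kappa|ij)$ I would rewrite the difference of the two sides of $(\star)$ as
\[
\big(2\kappa_i(1-e^{-t})-t\big)\sigma_{k-1}(\kappa|i)-2\kappa_i t e^{-t}\sigma_{k-2}(\kappa|ij),
\]
and combine the further expansion $\sigma_{k-1}(\kappa|i)=\kappa_j\sigma_{k-2}(\kappa|ij)+\sigma_{k-1}(\kappa|ij)$ with Taylor estimates on $1-e^{-t}$ to conclude.

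The main obstacle is the intermediate regime $1\ll t\ll\kappa_1$, where the favorable estimates of the two extreme sub-ranges do not apply and both terms in the last display are of comparable size in $\kappa_1$. The crucial analytic input needed there is the coefficient inequality $2\kappa_i(1-e^{-t})-t\geq \frac{2k-n}{n}\,t$ on $t\in(0,n\kappa_1/k]$, which is precisely where the assumption $2k>n$ enters essentially: it is what guarantees that the first term dominates $2\kappa_i t e^{-t}\sigma_{k-2}(\kappa|ij)$ uniformly in the worst case $\kappa_j\approx -(n-k)\kappa_1/k$. Without $2k>n$ this balancing argument cannot close.
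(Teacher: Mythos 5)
Your reduction is correct and is in fact identical to the paper's: the inequality $(\star)$, namely $(\kappa_i+\kappa_j)\sigma_{k-1}(\kappa|i)\geq 2\kappa_i e^{-t}\sigma_{k-1}(\kappa|j)$ with $t=\kappa_i-\kappa_j$, is exactly the paper's statement $L\geq 0$ after multiplying by $e^{t}$, and your window $\kappa_i+\kappa_j\geq \frac{2k-n}{k}\kappa_1-\sqrt{\kappa_1}/n>0$ is where $2k>n$ genuinely enters, just as in the paper. The gap is in your final paragraph, which misdiagnoses where the difficulty sits and rests on a false inequality. The claimed coefficient bound $2\kappa_i(1-e^{-t})-t\geq \frac{2k-n}{n}t$ on all of $t\in(0,n\kappa_1/k]$ fails near the right endpoint: for $t$ close to $n\kappa_1/k$ it requires $2\kappa_i\gtrsim \frac{2k}{n}t\approx 2\kappa_1$, while $\kappa_i$ may be as small as $\kappa_1-\sqrt{\kappa_1}/n$. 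Concretely, with $n=3$, $k=2$, $\kappa=(\kappa_1,\kappa_1-\sqrt{\kappa_1}/6,\kappa_3)$ and $\kappa_3$ chosen so that $\sigma_2=N_0$ (so $\kappa_3\approx-\kappa_1/2+\sqrt{\kappa_1}/24$), taking $i=2$, $j=3$ gives $t\approx\frac{3}{2}\kappa_1-\frac{5}{24}\sqrt{\kappa_1}$ and the difference of the two sides is $\approx-\sqrt{\kappa_1}/18<0$. Moreover the regime you flag as critical is misplaced: the ``worst case'' $\kappa_j\approx-(n-k)\kappa_1/k$ forces $t\approx n\kappa_1/k$, which is comparable to $\kappa_1$, not $1\ll t\ll\kappa_1$; there $e^{-t}$ is exponentially small in $\kappa_1$ and no balancing is needed at all --- one only needs the quantitative lower bound $(\kappa_i+\kappa_j)\sigma_{k-1}(\kappa|i)\gtrsim N_0$ coming from $2k>n$, $\sigma_k\geq N_0$ and Lemma \ref{lemm11} (this is the paper's easy case $t\geq\sqrt{\kappa_1}$).

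Conversely, whenever $t\leq\sqrt{\kappa_1}$ one automatically has $\kappa_j>\kappa_1/2>0$, and this is where the real work lies. Your plan there (``expand $\sigma_{k-1}(\kappa|i)=\kappa_j\sigma_{k-2}(\kappa|ij)+\sigma_{k-1}(\kappa|ij)$ and Taylor-estimate'') is viable only if you can control the possibly negative term $\sigma_{k-1}(\kappa|ij)$; the crude bound $|\sigma_{k-1}(\kappa|ij)|\leq\Theta\sigma_{k-1}(\kappa|j)$ from Lemma \ref{lemm7} loses a factor that is not affordable when $t$ is of order $\sqrt{\kappa_1}$ and $\kappa_j$ is only $\kappa_1/2$. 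The paper closes this by splitting on the sign of $\sigma_{k-1}(\kappa|ij)$ and, in the negative case, substituting the identity \eqref{n311} to bring $\sigma_k(\kappa)$ into play before applying Lemma \ref{lemm7}; some such extra input is needed, and your sketch does not supply it. Finally, note that at $t=0$ the two sides of $(\star)$ coincide, so the $t\leq 0$ case is not a soft perturbation argument either: one needs the first-order structure (again a sign split on $\sigma_{k-1}(\kappa|ij)$) to get the inequality uniformly for $|t|$ up to $\sqrt{\kappa_1}/n$.
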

\begin{proof}
If $\kappa_i=\kappa_j$, the left hand side of (\ref{a3.6}) should be
viewed as a limitation when $\kappa_j$ converging to $\kappa_i$,
about which we refer \cite{Ball} for more explanation. It is easy to
see that the limitation  is $2\kappa_i\sigma_k^{jj}(\kappa)$. Thus,
a straightforward calculation shows
\begin{align*}
&2\kappa_i\sigma_{k-1}(\kappa|j)-\sigma_{k-1}(\kappa|j)-(\kappa_i+\kappa_j)\sigma_{k-2}(\kappa|ij)\\
=&2\kappa_i\sigma_{k-1}(\kappa|j)-2\sigma_{k-1}(\kappa|j)-\sigma_{k-1}(\kappa|i)+2\sigma_{k-1}(\kappa|ij).
\end{align*}
Using Lemma \ref{lemm7}, $|\sigma_{k-1}(\kappa|ij)|$ can be bounded
by $\Theta\sigma_{k-1}(\kappa|j)$. Thus, since we have
$\sigma_{k-1}(\kappa|i)=\sigma_{k-1}(\kappa|j)$, the above formula
is positive if $\kappa_1$ is sufficiently large.

 If $\kappa_i\neq\kappa_j$, we have the following identity,
\begin{align}\label{n3.7}
\sigma_{k}^{jj}(\kappa)+(\kappa_i+\kappa_j)\sigma_k^{ii,jj}(\kappa)
=\dfrac{2\kappa_i}{\kappa_i-\kappa_j}\sigma_k^{jj}(\kappa)-\dfrac{\kappa_i+\kappa_j}{\kappa_i-\kappa_j}\sigma_k^{ii}(\kappa).
\end{align}
In view of \eqref{n3.7}, in order to prove \eqref{a3.6}, it suffices
to show
\begin{align}\label{nn}
\dfrac{-2\kappa_ie^{\kappa_j-\kappa_i}}{\kappa_i-\kappa_j}\sigma_{k}^{jj}(\kappa)\geq
-\dfrac{\kappa_i+\kappa_j}{\kappa_i-\kappa_j}\sigma_k^{ii}(\kappa).
\end{align}
Let's define some function:
\begin{eqnarray}\label{n3.8}
L=\left\{\begin{matrix}(\kappa_i+\kappa_j)e^{\kappa_i-\kappa_j}\sigma_k^{ii}(\kappa)-2\kappa_i\sigma_k^{jj}(\kappa)&
\kappa_i>\kappa_j,\\
2\kappa_ie^{\kappa_j-\kappa_i}\sigma_k^{jj}(\kappa)-(\kappa_i+\kappa_j)\sigma_k^{ii}(\kappa)&
\kappa_i<\kappa_j. \end{matrix}\right.
\end{eqnarray}
Obviously, $L\geq 0$ implies \eqref{nn}. Thus, let's prove $L\geq 0$
in the following for $\kappa_i>\kappa_j$ and $\kappa_i<\kappa_j$
respectively.

\vskip 1mm
\par
If $\kappa_i>\kappa_j$, we let $t=\kappa_i-\kappa_j$. Thus we have
$t> 0$. We divide into two cases to prove $L$ is non negative for
$\kappa_i>\kappa_j$.

Case (a): Suppose $t\geq\sqrt{\kappa_1}$. In this case, our
assumption gives $e^t\geq
e^{\sqrt{\kappa_1}}>\dfrac{(\sqrt{\kappa_1})^{2k+1}}{(2k+1)!}$. Here
we have used the Taylor expansion in the second inequality.

If $\kappa_j\leq 0$, using $n\leq 2k-1$ and Lemma \ref{lemm9}, we
have $-\kappa_j<\dfrac{(n-k)\kappa_1}{k}\leq
\dfrac{(k-1)\kappa_1}{k}$. Thus, since
$\kappa_i>\kappa_1-\sqrt{\kappa_1}/n$, we have
$\kappa_i+\kappa_j>\dfrac{\kappa_1}{2k}$ if $\kappa_1>10$. If
$\kappa_j>0$, it is easy to see
$\kappa_i+\kappa_j>\dfrac{\kappa_1}{2k}$. Thus, in any cases, we
have
$$L\geq \frac{\kappa_1^{k+\frac{3}{2}}}{2k (2k+1)!}\sigma_k^{ii}(\kappa)-2\kappa_1\sigma_k^{jj}(\kappa)\geq \kappa_1\left(\frac{\kappa_1^{k-1}\sqrt{\kappa_1}\theta N_0}{2k (2k+1)!}-2\sigma_k^{jj}(\kappa)\right)\geq 0,$$
if $\kappa_1$ is sufficiently large. Here we have used
$\kappa_1\sigma_k^{ii}(\kappa)\geq \theta\sigma_k(\kappa)$.

Case (b): Suppose $t<\sqrt{\kappa_1}$. Using
$\kappa_i>\kappa_1-\sqrt{\kappa_1}/n$, we have $\kappa_j>\kappa_1/2$
if $\kappa_1>10$. For simplification purpose, denote
$\tilde\sigma_m=\sigma_m(\kappa|ij)$. We have
\begin{align}\label{s3.07}
L=&(\kappa_i+\kappa_j)e^{t}\sigma_{k-1}(\kappa|i)-2\kappa_i\sigma_{k-1}(\kappa|j)\\
=&(\kappa_i+\kappa_j)e^{t}(\kappa_j\tilde\sigma_{k-2}+\tilde\sigma_{k-1})-2\kappa_i(\kappa_i\tilde\sigma_{k-2}+\tilde\sigma_{k-1})\nonumber\\
=&[\kappa_j(\kappa_j+\kappa_i)(e^{t}-1)-t(\kappa_j+\kappa_i)-t\kappa_i]\tilde\sigma_{k-2}+[(\kappa_i+\kappa_j)(e^t-1)-t]\tilde\sigma_{k-1},\nonumber
\end{align}
where in the last equality, we have used $t=\kappa_i-\kappa_j$. We
further divide into two sub-cases to prove the nonnegativity of $L$.

Subcase (b1): Suppose $\tilde\sigma_{k-1}\geq 0$. Note that
$e^t>1+t$. By (\ref{s3.07}) and
$\kappa_i>\kappa_1-\sqrt{\kappa_1}/n, \kappa_j>\kappa_1/2$, we get
$L\geq0$.

Subcase (b2): Suppose $\tilde\sigma_{k-1}< 0$. Inserting the
identity
\begin{align}\label{n311}
(\kappa_i+\kappa_j)\tilde\sigma_{k-1}=\sigma_k(\kappa)-\kappa_i\kappa_j\tilde\sigma_{k-2}-\tilde\sigma_{k}
\end{align}
into the last equality of (\ref{s3.07}), we get
\begin{align}\label{s3.08}
L=&[(\kappa_j^2+\kappa_j\kappa_i)(e^{t}-1)-t(\kappa_j+\kappa_i)-t\kappa_i]\tilde\sigma_{k-2}-t\tilde\sigma_{k-1}\\
&+(e^t-1)[\sigma_k(\kappa)-\kappa_i\kappa_j\tilde\sigma_{k-2}-\tilde\sigma_{k}]\nonumber\\
\geq&[\kappa_j^2(e^t-1)-3t\kappa_i]\tilde\sigma_{k-2}+(e^t-1)[\sigma_k(\kappa)-\tilde\sigma_{k}],\nonumber
\end{align}
where in the last inequality, we have used $\kappa_i\geq
\kappa_j>0,t>0$, and $\tilde\sigma_{k-1}<0$.

For $\sigma_{k-1}(\kappa|i)$ and $\sigma_{k-1}(\kappa|j)$, we have
following estimate
\begin{align}\label{n3.12}
\sigma_{k-1}(\kappa|i)=&\sigma_{k-1}(\kappa|j)-t\sigma_{k-2}(\kappa|ij)\\
=&\sigma_{k-1}(\kappa|j)-\dfrac{t}{\kappa_i}[\sigma_{k-1}(\kappa|j)-\sigma_{k-1}(\kappa|ij)]\nonumber\\
=&\left(1-\dfrac{t}{\kappa_i}\right)\sigma_{k-1}(\kappa|j)+\dfrac{t}{\kappa_i}\sigma_{k-1}(\kappa|ij)\nonumber\\
\geq&\left(1-\dfrac{t(1+\Theta)}{\kappa_i}\right)\sigma_{k-1}(\kappa|j)\geq\dfrac{1}{2}\sigma_{k-1}(\kappa|j),\nonumber
\end{align}
if $\kappa_1$ is sufficiently large. Here in the fourth inequality,
we have used Lemma \ref{lemm7} to estimate the term
$\sigma_{k-1}(\kappa|ij)$. We also have
\begin{align}\label{n3.13}
\kappa_j^2\tilde\sigma_{k-2}+\sigma_k(\kappa)-\tilde\sigma_{k}
=&(\kappa_j+\kappa_i)\sigma_{k-1}(\kappa|i)
\end{align}
and
\begin{align}\label{n315}
3\kappa_i=3\kappa_j+3t\leq 3\kappa_j+3\sqrt{\kappa_1}\leq 4\kappa_j,
\end{align}
where we have used $\kappa_j>\kappa_1/2$ and $\kappa_1$ is
sufficiently large. Thus, using \eqref{n3.13} and \eqref{n315},
(\ref{s3.08}) becomes
\begin{align*}
L \geq&(e^t-1)(\kappa_j+\kappa_i)\sigma_{k-1}(\kappa|i)
-3t\kappa_i\tilde\sigma_{k-2}\nonumber\\
\geq&t(\kappa_j+\kappa_i)\sigma_{k-1}(\kappa|i)
-4t\kappa_j\tilde\sigma_{k-2}\nonumber\\
=&t(\kappa_j+\kappa_i-4)\sigma_{k-1}(\kappa|i)+4t\sigma_{k-1}(\kappa|ij)\\
\geq&t(\kappa_j+\kappa_i-4)\sigma_{k-1}(\kappa|j)/2+4t\sigma_{k-1}(\kappa|ij)\\
\geq&\frac{t}{2}(\kappa_j+\kappa_i-4-8\Theta)\sigma_{k-1}(\kappa|j)\geq
0
\end{align*}
if $\kappa_1$ is sufficiently large. Here in the forth inequality,
we have used \eqref{n3.12} and in the last inequality, we have used
Lemma \ref{lemm7} to give the lower bound of
$\sigma_{k-1}(\kappa|ij)$.


If $\kappa_i<\kappa_j$, we let $t=\kappa_j-\kappa_i$, which yields
$0<t\leq \kappa_1-\kappa_i<\sqrt{\kappa_1}/n$. For simplification
purpose, we still denote $\tilde\sigma_m=\sigma_m(\kappa|ij)$. Thus,
using $t=\kappa_j-\kappa_i$ we have
\begin{align}\label{s3.09}
L=&2\kappa_ie^{t}\sigma_{k-1}(\kappa|j)-(\kappa_i+\kappa_j)\sigma_{k-1}(\kappa|i)\\
=&2\kappa_ie^{t}(\kappa_i\tilde\sigma_{k-2}+\tilde\sigma_{k-1})-(\kappa_i+\kappa_j)(\kappa_j\tilde\sigma_{k-2}+\tilde\sigma_{k-1})\nonumber\\
=&[2\kappa_i^2(e^{t}-1)-3t\kappa_i-t^2]\tilde\sigma_{k-2}+[2\kappa_i(e^t-1)-t]\tilde\sigma_{k-1}.\nonumber
\end{align}
We divide into two cases to prove $L\geq 0$.

Case (c1): Suppose $\tilde\sigma_{k-1}\geq 0$. Since we have
$e^t>1+t$, $t<\sqrt{\kappa_1}/n$ and $\kappa_j>\kappa_i>\kappa_1/2$,
in view of (\ref{s3.09}), we get $L\geq 0$.

Case (c2): Suppose $\tilde\sigma_{k-1}< 0$. Inserting the identity
\eqref{n311} into the last formula of (\ref{s3.09}) and using
$2\kappa_i=\kappa_i+\kappa_j-t$, we get
\begin{align}\label{n3.16}
L=&[2\kappa_i^2(e^{t}-1)-3t\kappa_i-t^2]\tilde\sigma_{k-2}-te^t\tilde\sigma_{k-1}\\
&+(e^t-1)[\sigma_k(\kappa)-\kappa_i\kappa_j\tilde\sigma_{k-2}-\tilde\sigma_{k}]\nonumber\\
\geq&[(\kappa_i^2-\kappa_i
t)(e^t-1)-4t\kappa_i]\tilde\sigma_{k-2}+(e^t-1)[\sigma_k(\kappa)-\tilde\sigma_{k}],\nonumber
\end{align}
where in the last inequality, we have used $\tilde\sigma_{k-1}<0$,
$t<\sqrt{\kappa_1}/n<\kappa_i$ and $\kappa_1$ is sufficiently large.
Note that comparing the previous case $\kappa_i>\kappa_j$, this case
exchanges the position of $i$ and $j$. Thus, exchanging the indices
$i$ and $j$ in \eqref{n3.13} and \eqref{n3.12}, we get the formulae,
\begin{align}\label{n3.17}
\kappa_i^2\tilde\sigma_{k-2}+\sigma_k(\kappa)-\tilde\sigma_{k}=&(\kappa_i+\kappa_j)\sigma_{k-1}(\kappa|j),
\ \ \sigma_{k-1}(\kappa|j)\geq \frac{\sigma_{k-1}(\kappa|i)}{2}.
\end{align}
Combing \eqref{n3.16} with \eqref{n3.17}, we get
\begin{align*}
L \geq&(e^t-1)(\kappa_i+\kappa_j)\sigma_{k-1}(\kappa|j)
-[(e^t-1)t+4t]\kappa_i\tilde\sigma_{k-2}\nonumber\\
=&[(e^t-1)(\kappa_i+\kappa_j)-(e^t+3)t]\sigma_{k-1}(\kappa|j)
+(e^t+3)t\tilde\sigma_{k-1}\\
\geq&[(e^t-1)(\kappa_i+\kappa_j)-(e^t+3)t]\frac{\sigma_{k-1}(\kappa|i)}{2}
+(e^t+3)t\tilde\sigma_{k-1}\\
\geq&[(e^t-1)(\kappa_i+\kappa_j)-(1+2\Theta)(e^t+3)t]\frac{\sigma_{k-1}(\kappa|i)}{2}\\
\geq&0,
\end{align*}
if $\kappa_1$ is sufficiently large. Here, in the fourth inequality,
we  have used Lemma \ref{lemm7} to give the lower bound of
$\tilde\sigma_{k-1}$,  and in the last inequality, we have used
$e^t>1+t$, $\kappa_j\geq\kappa_i\geq\kappa_1-\sqrt{\kappa_1}/n$,
$t\leq \sqrt{\kappa_1}/n$.
\end{proof}

Now, we are in the position to handle the case (II).

\begin{lemm}\label{le1}
Assume $\kappa\in\Gamma_k$, $2k>n$, and $\kappa_1$ is the maximum
entry of $\kappa$. For given index $1\leq i\leq n$, if $\kappa_i>
\kappa_1-\sqrt{\kappa_1}/n$, then we have
\begin{align*}
A_i+B_i+C_i+D_i-E_i\geq 0,
\end{align*}
when the positive constant $K$ and the biggest eigenavlue $\kappa_1$ both are sufficiently large.
\end{lemm}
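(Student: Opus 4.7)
The plan is to parallel the proof of Lemma~\ref{le2} but to invoke Conjecture~\ref{con} to supply the positivity that, in case~(I), was provided by the gap $\kappa_1 - \kappa_i \geq \sqrt{\kappa_1}/n$. Throughout, $K$ and $\kappa_1$ are taken sufficiently large, so that in particular $\kappa_i > \kappa_1 - \sqrt{\kappa_1}/n > 0$ and the hypotheses of Conjecture~\ref{con} hold.

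Setting $\xi_j = h_{jji}$ in Conjecture~\ref{con} and multiplying by $e^{\kappa_i}/\kappa_i > 0$ gives
\[ A_i \geq \frac{e^{\kappa_i}}{\kappa_i}\Big(\sigma_{k}^{ii}(\kappa)\,h_{iii}^2 - \sum_{j \neq i} a_j\, h_{jji}^2\Big). \]
Next, Lemma~\ref{lemm16} implies
\[ \frac{e^{\kappa_i}}{\kappa_i}\, a_j \leq 2\,\sigma_{k}^{jj}(\kappa)\, \frac{e^{\kappa_j} - e^{\kappa_i}}{\kappa_j - \kappa_i}, \]
so multiplying by $h_{jji}^2 \geq 0$ and summing over $j \neq i$ produces $\tfrac{e^{\kappa_i}}{\kappa_i}\sum_{j \neq i} a_j\, h_{jji}^2 \leq D_i$; combining with the previous estimate,
\[ A_i + D_i \geq \frac{e^{\kappa_i}}{\kappa_i}\,\sigma_{k}^{ii}(\kappa)\, h_{iii}^2. \]

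It then suffices to prove $\tfrac{e^{\kappa_i}}{\kappa_i}\sigma_{k}^{ii}(\kappa)h_{iii}^2 + B_i + C_i - E_i \geq 0$. For this, follow the scheme of Lemma~\ref{le2}: expand $P_i^2$ via the Cauchy-Schwarz decomposition~\eqref{e2.19}, apply Lemma~\ref{lemm13} (in particular with $l = 1$), and use the mixed-term estimate~\eqref{CauS}, reducing the matter to non-negativity of a quadratic form in $h_{iii}$ and $h_{11i}$. The surplus $\tfrac{e^{\kappa_i}}{\kappa_i}\sigma_{k}^{ii}(\kappa)h_{iii}^2$ extracted from Conjecture~\ref{con} is intended to play the role of the boosting factor that in case~(I) came from the gap $\kappa_1 - \kappa_i \geq \sqrt{\kappa_1}/n$.

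The hard part is precisely this last step. In the proof of Lemma~\ref{le2}, the terminal quadratic form was made non-negative because its coefficient on $h_{11i}^2$ was $\tfrac{1}{3k}\sum_{m \geq 1}(\kappa_1 - \kappa_i)^{m-1}/m!$, a quantity that is large under the gap condition. In case~(II) this series degenerates to a value of order one, so an extra positive contribution must be introduced; the surplus from Conjecture~\ref{con} combined carefully with the Cauchy-Schwarz cross-term estimate is expected to close the discriminant inequality for the residual quadratic form, completing the argument.
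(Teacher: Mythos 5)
Your first half is exactly the paper's argument: substituting $\xi_j=h_{jji}$ into Conjecture \ref{con}, multiplying by $e^{\kappa_i}/\kappa_i>0$, and absorbing $\frac{e^{\kappa_i}}{\kappa_i}\sum_{j\neq i}a_jh_{jji}^2$ into $D_i$ via Lemma \ref{lemm16} is precisely how the paper obtains its key inequality \eqref{e2.28}, so the reduction to $\frac{e^{\kappa_i}}{\kappa_i}\sigma_k^{ii}h_{iii}^2+B_i+C_i-E_i\geq 0$ is sound.

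The completion you sketch, however, contains a genuine gap, and the specific route you propose would fail. First, Lemma \ref{lemm13} cannot be invoked at this stage: its left-hand side contains the terms $(2-\delta_k)\frac{e^{\kappa_l}-e^{\kappa_i}}{\kappa_l-\kappa_i}\sigma_{k-1}(\kappa|l)$, i.e.\ it trades the $B_i$-terms \emph{together with} the $D_i$-terms against part of $E_i$, but you have already spent all of $D_i$ in absorbing $\sum_{j\neq i}a_jh_{jji}^2$; using it again double-counts. (Moreover, in case (II) one may have $i=1$, so singling out $l=1$ is not even meaningful.) What is needed in place of Lemma \ref{lemm13} is the inequality $2\sum_{l\neq i}e^{\kappa_l}\sigma_k^{ll,ii}h_{lli}^2\geq\frac{1}{\log P}\sum_{l\neq i}e^{\kappa_l}\sigma_k^{ii}h_{lli}^2$, which the paper proves as \eqref{e2.30} from $2\kappa_1\sigma_{k-2}(\kappa|il)\geq\sigma_{k-1}(\kappa|i)$; this is exactly where the hypothesis $n<2k$ enters, via Lemma \ref{lemm8} applied to $(\kappa|1i)$, and your proposal never supplies it. Second, no ``discriminant inequality for a residual quadratic form'' arises, and your plan leaves precisely that step unverified. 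The paper instead applies the Cauchy--Schwarz estimate \eqref{e2.31} over \emph{all} $l\neq i$ (including $l=1$, unlike case (I)), eliminating the cross terms at the cost of the deficit $-\frac{1+\log P}{P\log P}\sum_{l\neq i}e^{\kappa_l+\kappa_i}\sigma_k^{ii}h_{iii}^2$; combined with $e^{\kappa_i}\sigma_k^{ii}h_{iii}^2-\frac{1+\log P}{P\log P}e^{2\kappa_i}\sigma_k^{ii}h_{iii}^2$ this sums to exactly $-\frac{1}{\log P}e^{\kappa_i}\sigma_k^{ii}h_{iii}^2$, which is covered by the surplus from the Conjecture because $\frac{e^{\kappa_i}}{\kappa_i}\geq\frac{e^{\kappa_i}}{\log P}$. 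The bookkeeping closes identically to zero with no quadratic form left over; as written, your argument stops short of establishing the one inequality that constitutes the substance of case (II).
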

\begin{proof}
Using  (\ref{e2.20}), we have
\begin{align}\label{e2.27}
&A_i+B_i+C_i+D_i-E_i\\
\geq& e^{\kappa_i}\big[K(\sigma_{k})_i^2-\sigma_{k}^{pp,qq}h_{ppi}h_{qqi}\big]+2\dsum_{l\neq i}e^{\kappa_l}\sigma_{k}^{ll,ii}h_{lli}^2\nonumber\\
&-\dfrac{1}{\log P}\dsum_{l\neq
i}e^{\kappa_l}\sigma_{k}^{ii}h_{lli}^2+\dfrac{1+\log P}{P\log
P}\dsum_{l\neq
i}e^{\kappa_l+\kappa_i}\sigma_{k}^{ii}h_{lli}^2 \nonumber\\
&+ 2\dsum_{l\neq
i}\dfrac{e^{\kappa_l}-e^{\kappa_i}}{\kappa_l-\kappa_i}\sigma_{k}^{ll}h_{lli}^2
+e^{\kappa_i}\sigma_{k}^{ii}h_{iii}^2 -\frac{1+\log
P}{P\log P}e^{2\kappa_i}\sigma_{k}^{ii}h_{iii}^2 \nonumber\\
&-2\frac{1+\log P}{P\log P}\dsum_{l\neq
i}e^{\kappa_i+\kappa_l}\sigma_{k}^{ii}h_{iii}h_{lli}. \nonumber
\end{align}

Note that $\log P>\kappa_1$, using Conjecture \ref{con} and Lemma
\ref{lemm16}, we have
\begin{align}\label{e2.28}
e^{\kappa_i}\big[K(\sigma_{k})_i^2-\sigma_{k}^{pp,qq}h_{ppi}h_{qqi}\big]+2\dsum_{l\neq
i}\dfrac{e^{\kappa_l}-e^{\kappa_i}}{\kappa_l-\kappa_i}\sigma_{k}^{ll}h_{lli}^2
\geq\frac{1}{\log P}e^{\kappa_i}\sigma_{k}^{ii}h_{iii}^2.
\end{align}

Now, we will show
\begin{align}\label{e2.30}
2\dsum_{l\neq i}e^{\kappa_l}\sigma_{k}^{ll,ii}h_{lli}^2
-\dfrac{1}{\log P}\dsum_{l\neq
i}e^{\kappa_l}\sigma_{k}^{ii}h_{lli}^2\geq 0.
\end{align}
It is clear that
\begin{align}\label{s3.21}
2\kappa_1\sigma_{k}^{ii,ll}(\kappa)-\sigma_{k}^{ii}(\kappa)\geq
&2\kappa_1\sigma_{k-2}(\kappa|1i)-\sigma_{k-1}(\kappa|i)\\
=&\kappa_1\sigma_{k-2}(\kappa|1i)-\sigma_{k-1}(\kappa|1i).\nonumber
\end{align}
Thus, if $\sigma_{k-1}(\kappa|1i)\leq 0$, (\ref{e2.30}) obviously
holds. If $\sigma_{k-1}(\kappa|1i)>0$, we have
$(\kappa|1i)\in\Gamma_{k-1}$. Therefore, by Lemma \ref{lemm8}, we
have
\begin{align}\label{s3.22}
\sigma_{k-1}(\kappa|1i)\leq
\dfrac{n-k}{k-1}\sigma_{k-2}(\kappa|1i)\kappa_1.
\end{align}
Thus if we have $n< 2k$ which implies $n\leq 2k-1$, combing
(\ref{s3.21}) with (\ref{s3.22}), we get
\begin{center}
$ 2\kappa_1\sigma_{k}^{ii,ll}(\kappa)-\sigma_{k}^{ii}(\kappa)\geq
\dfrac{2k-1-n}{k-1}\sigma_{k-2}(\kappa|1i)\kappa_1\geq 0, $
\end{center} which gives the inequality (\ref{e2.30}).

On the other hand, we have
\begin{align}\label{e2.31}
&\dfrac{1+\log P}{P\log P}\dsum_{l\neq
i}e^{\kappa_l+\kappa_i}\sigma_{k}^{ii}h_{lli}^2 -2\frac{1+\log
P}{P\log P}\dsum_{l\neq
i}e^{\kappa_i+\kappa_l}\sigma_{k}^{ii}h_{iii}h_{lli}\\
\geq&-\dfrac{1+\log P}{P\log P}\dsum_{l\neq
i}e^{\kappa_l+\kappa_i}\sigma_{k}^{ii}h_{iii}^2. \nonumber
\end{align}
Inserting (\ref{e2.28}), (\ref{e2.30}) and (\ref{e2.31}) into
(\ref{e2.27}), we obtain
\begin{align}
&A_i+B_i+C_i+D_i-E_i\nonumber\\
\geq& \frac{1}{\log
P}e^{\kappa_i}\sigma_{k}^{ii}h_{iii}^2+e^{\kappa_i}\sigma_{k}^{ii}h_{iii}^2
-\frac{1+\log P}{P\log
P}e^{2\kappa_i}\sigma_{k}^{ii}h_{iii}^2\nonumber\\
&-\dfrac{1+\log P}{P\log P}\dsum_{l\neq
i}e^{\kappa_l+\kappa_i}\sigma_{k}^{ii}h_{iii}^2\nonumber\\
=&0\nonumber.
\end{align}

\end{proof}

\section{An inequality}
In this section, we will prove Theorem \ref{lemm17}.
The argument is closed to \cite{RW}, but will become
a little more complicated.

Before to prove our Theorem, we need some algebraic identities.
\begin{lemm}\label{lemm18}
Assume $\kappa=(\kappa_1,\cdots,\kappa_n)\in\Gamma_{k}$. Suppose $1\leq i,j,p,q\leq n$ are given indices.
$a_j$ and $c_{k,K}$ are defined by \eqref{aj} and \eqref{ckK}. We have the following five identities:

\noindent (1)
\begin{align*}
&\kappa_iK\sigma_{k}^{ii}(\kappa)\sigma_{k}^{jj}(\kappa)[-\sigma_{k}^{jj}(\kappa)+2\kappa_i\sigma_{k}^{ii,jj}(\kappa)]-\kappa_i^2[\sigma_{k}^{ii,jj}(\kappa)]^2
+a_j[\kappa_iK\sigma_{k}^{ii}(\kappa)-1]\sigma_{k}^{ii}(\kappa)\\
=&\frac{1}{c_{k,K}}[\sigma_{k}^{ii}(\kappa)+\sigma_{k}^{jj}(\kappa)](\kappa_i+\kappa_j)\sigma_{k-2}(\kappa|ij)-\sigma_{k-1}^2(\kappa|ij).
\end{align*}

\noindent (2)
\begin{align*}
&\kappa_i\left[\sigma_{k}^{pp}(\kappa)\sigma_{k}^{ii,qq}(\kappa)+\sigma_{k}^{qq}(\kappa)\sigma_{k}^{ii,pp}(\kappa)-\sigma_{k}^{ii}(\kappa)
\sigma_{k}^{pp,qq}(\kappa)\right]-\sigma_{k}^{pp}(\kappa)\sigma_{k}^{qq}(\kappa)\nonumber\\
&-\kappa_i^2\sigma_{k}^{ii,pp}(\kappa)\sigma_{k}^{ii,qq}(\kappa)+\kappa_i\sigma_{k}^{ii}(\kappa)\sigma_{k}^{pp,qq}(\kappa)\nonumber\\
=&-\sigma_{k-1}(\kappa|ip)\sigma_{k-1}(\kappa|iq).
\end{align*}

\noindent (3)
\begin{align*}
\left[\sigma_{k}^{ii}(\kappa)+\sigma_{k}^{jj}(\kappa)\right](\kappa_i+\kappa_j)
=&2\sigma_k(\kappa)-2\sigma_{k}(\kappa|ij)+\kappa_i^2\sigma_{k-2}(\kappa|ij)+\kappa_j^2\sigma_{k-2}(\kappa|ij).\nonumber
\end{align*}

\noindent (4)
\begin{align*}
\sigma_{k}^{qq}(\kappa)\sigma_{k}^{ii,pp}(\kappa)-\sigma_{k}^{ii}(\kappa)\sigma_{k}^{pp,qq}(\kappa)=&\kappa_i\sigma_{k-2}^2(\kappa|ipq)
-\kappa_i\sigma_{k-1}(\kappa|ipq)\sigma_{k-3}(\kappa|ipq)\nonumber\\
&+\kappa_q\sigma_{k-3}(\kappa|ipq)\sigma_{k-1}(\kappa|ipq)
-\kappa_q\sigma_{k-2}^2(\kappa|ipq).
\end{align*}

\noindent (5)
\begin{align*}
\sigma_{k}^{pp}(\kappa)\sigma_{k-1}(\kappa|iq)=&\sigma_k\sigma_{k-2}(\kappa|ipq)+\sigma_{k-1}^2(\kappa|ipq)-\sigma_{k}(\kappa|ipq)\sigma_{k-2}(\kappa|ipq)\nonumber\\
&-\kappa_q\kappa_i\sigma_{k-2}^2(\kappa|ipq)+\kappa_q\kappa_i\sigma_{k-3}(\kappa|ipq)\sigma_{k-1}(\kappa|ipq).
\end{align*}

\end{lemm}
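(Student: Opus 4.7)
The plan is to treat each of the five identities as a direct algebraic manipulation of elementary symmetric polynomials, using repeatedly the fundamental recursion (iii) from Section~2,
$$\sigma_l(\kappa)=\kappa_a\sigma_{l-1}(\kappa|a)+\sigma_l(\kappa|a),$$
together with its iterate obtained by peeling off one more index:
$$\sigma_l(\kappa|a)=\kappa_b\sigma_{l-1}(\kappa|ab)+\sigma_l(\kappa|ab).$$
All derivatives appearing in the statement are then rewritten purely in terms of $\sigma_\ast(\kappa|ij)$ or $\sigma_\ast(\kappa|ipq)$, after which each identity reduces to a polynomial identity in the variables $\kappa_i,\kappa_j$ (or $\kappa_i,\kappa_p,\kappa_q$) and a handful of abbreviated symbols like $A=\sigma_{k-3}(\kappa|ipq)$, $B=\sigma_{k-2}(\kappa|ipq)$, $C=\sigma_{k-1}(\kappa|ipq)$.

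For identity (3), I will expand $\sigma_k^{ii}=\sigma_{k-1}(\kappa|i)$ and $\sigma_k^{jj}=\sigma_{k-1}(\kappa|j)$ by removing the opposite index, giving $\sigma_k^{ii}+\sigma_k^{jj}=(\kappa_i+\kappa_j)\sigma_{k-2}(\kappa|ij)+2\sigma_{k-1}(\kappa|ij)$, and then compare with the expansion $\sigma_k(\kappa)=\kappa_i\kappa_j\sigma_{k-2}(\kappa|ij)+(\kappa_i+\kappa_j)\sigma_{k-1}(\kappa|ij)+\sigma_k(\kappa|ij)$. For identity (2), I will observe that the left hand side factors as
$$-\bigl[\kappa_i\sigma_k^{ii,pp}(\kappa)-\sigma_k^{pp}(\kappa)\bigr]\bigl[\kappa_i\sigma_k^{ii,qq}(\kappa)-\sigma_k^{qq}(\kappa)\bigr],$$
and then use $\sigma_{k-1}(\kappa|p)=\kappa_i\sigma_{k-2}(\kappa|ip)+\sigma_{k-1}(\kappa|ip)$ to identify each factor with $-\sigma_{k-1}(\kappa|ip)$ and $-\sigma_{k-1}(\kappa|iq)$ respectively. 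This recognition of the factorization is the only non-mechanical step, and once spotted the identity is immediate.

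Identities (4) and (5) are handled uniformly: write each of $\sigma_{k-1}(\kappa|\cdot)$, $\sigma_{k-2}(\kappa|\cdot\cdot)$ appearing on the left hand side as a polynomial in $\kappa_i,\kappa_p,\kappa_q$ with coefficients $A,B,C$, expand the products, and then collect by monomials in the $\kappa$'s. For (4) the coefficient of $AB$ in the difference $\sigma_{k-1}(\kappa|q)\sigma_{k-2}(\kappa|ip)-\sigma_{k-1}(\kappa|i)\sigma_{k-2}(\kappa|pq)$ cancels identically, leaving $(\kappa_i-\kappa_q)(B^2-AC)$, which is exactly the right hand side. For (5), I will expand the right hand side using the three-index version of (iii) applied to $\sigma_k(\kappa)$, and match the expansion of $\sigma_{k-1}(\kappa|p)\sigma_{k-1}(\kappa|iq)$ term by term.

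Identity (1) is the longest and I expect it to be the main obstacle, because the quantity $1/c_{k,K}=K\kappa_i\sigma_{k-1}(\kappa|i)-1$ mixes a $K$-dependent and a $K$-independent part. My approach is to split both sides of (1) into a ``$K$-part'' and a ``constant part.'' Matching the $K$-parts reduces, after using the definition of $a_j$, to the single identity $\sigma_k^{ii}(\kappa)-\sigma_k^{jj}(\kappa)+(\kappa_i-\kappa_j)\sigma_k^{ii,jj}(\kappa)=0$, which is immediate from the two expansions $\sigma_{k-1}(\kappa|i)=\kappa_j\sigma_{k-2}(\kappa|ij)+\sigma_{k-1}(\kappa|ij)$ and $\sigma_{k-1}(\kappa|j)=\kappa_i\sigma_{k-2}(\kappa|ij)+\sigma_{k-1}(\kappa|ij)$. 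Matching the $K$-independent parts reduces to the identity
$$\sigma_{k-1}^2(\kappa|ij)=\sigma_k^{ii}(\kappa)\sigma_k^{jj}(\kappa)+\kappa_i^2[\sigma_k^{ii,jj}(\kappa)]^2-(\kappa_i+\kappa_j)\sigma_k^{jj}(\kappa)\sigma_k^{ii,jj}(\kappa),$$
which is verified by plugging in the same two expansions and observing that the coefficients of $\sigma_{k-2}^2(\kappa|ij)$ collapse to zero while the mixed $\sigma_{k-2}\sigma_{k-1}$ terms cancel. The organizational effort in bookkeeping the two halves and arranging the cancellations is the only real difficulty.
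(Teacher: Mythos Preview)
Your proposal is correct and follows essentially the same route as the paper: both reduce each identity to the basic recursion $\sigma_l(\kappa|a)=\kappa_b\sigma_{l-1}(\kappa|ab)+\sigma_l(\kappa|ab)$ and expand in the variables $\sigma_\ast(\kappa|ij)$ or $\sigma_\ast(\kappa|ipq)$. Your presentation differs only cosmetically---in (2) you recognize the factorization $-[\kappa_i\sigma_k^{ii,pp}-\sigma_k^{pp}][\kappa_i\sigma_k^{ii,qq}-\sigma_k^{qq}]$ at the outset whereas the paper peels off one factor at a time, and in (1) you separate the $K$-dependent and $K$-independent parts explicitly whereas the paper carries them together through a chain of equalities---but the two underlying sub-identities you isolate, $\sigma_k^{ii}-\sigma_k^{jj}+(\kappa_i-\kappa_j)\sigma_k^{ii,jj}=0$ and $(\kappa_i+\kappa_j)\sigma_k^{jj}\sigma_{k-2}(\kappa|ij)-\kappa_i^2(\sigma_k^{ii,jj})^2-\sigma_k^{ii}\sigma_k^{jj}=-\sigma_{k-1}^2(\kappa|ij)$, are exactly the ones the paper uses.
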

\par
\begin{proof}For simplification purpose, we omit the $\kappa$ in our notations in the following argument, which means that we let
$$\sigma_k=\sigma_k(\kappa), \ \ \sigma_k^{pp}=\sigma_k^{pp}(\kappa), \ \ \sigma_k^{pp,qq}=\sigma_k^{pp,qq}(\kappa).$$

\noindent (1) Using the identity
\begin{align*}
-\sigma_{k}^{jj}+2\kappa_i\sigma_{k}^{ii,jj}+\sigma_{k}^{ii}=(\kappa_i+\kappa_j)\sigma_{k-2}(\kappa|ij),
\end{align*}
and $a_j=\sigma_{k}^{jj}+(\kappa_i+\kappa_j)\sigma_k^{ii,jj}$, we
have
\begin{align}
&\kappa_iK\sigma_{k}^{ii}\sigma_{k}^{jj}(-\sigma_{k}^{jj}+2\kappa_i\sigma_{k}^{ii,jj})-\kappa_i^2(\sigma_{k}^{ii,jj})^2
+a_j[\kappa_iK(\sigma_{k}^{ii})^2-\sigma_{k}^{ii}] \label{s4.02}\\
=&\kappa_iK\sigma_{k}^{ii}\sigma_{k}^{jj}(-\sigma_{k}^{jj}+2\kappa_i\sigma_{k}^{ii,jj}+\sigma_{k}^{ii})-\kappa_i^2(\sigma_{k}^{ii,jj})^2-\sigma_{k}^{ii}\sigma_{k}^{jj}\nonumber\\
&
+(\kappa_iK\sigma_{k}^{ii}-1)\sigma_{k}^{ii}(\kappa_i+\kappa_j)\sigma_k^{ii,jj} \nonumber\\
=&(\kappa_iK\sigma_{k}^{ii}-1)\sigma_{k}^{jj}(\kappa_i+\kappa_j)\sigma_{k-2}(\kappa|ij)+(\kappa_i+\kappa_j)\sigma_k^{jj}\sigma_{k-2}(\kappa|ij)\nonumber
\\ &
+(\kappa_iK\sigma_{k}^{ii}-1)\sigma_{k}^{ii}(\kappa_i+\kappa_j)\sigma_k^{ii,jj}-\kappa_i^2(\sigma_{k}^{ii,jj})^2-\sigma_{k}^{ii}\sigma_{k}^{jj} \nonumber \\
=&(\kappa_iK\sigma_{k}^{ii}-1)(\sigma_{k}^{ii}+\sigma_{k}^{jj})(\kappa_i+\kappa_j)\sigma_{k-2}(\kappa|ij)\nonumber\\
&+(\kappa_i+\kappa_j)\sigma_{k}^{jj}\sigma_{k-2}(\kappa|ij)-\kappa_i^2(\sigma_{k}^{ii,jj})^2-\sigma_{k}^{ii}\sigma_{k}^{jj}
\nonumber\\
=&(\kappa_iK\sigma_{k}^{ii}-1)(\sigma_{k}^{ii}+\sigma_{k}^{jj})(\kappa_i+\kappa_j)\sigma_{k-2}(\kappa|ij)-\sigma_{k-1}^2(\kappa|ij).\nonumber
\end{align}
Here in the above last equality we have used the following identity,
\begin{align*}
&(\kappa_i+\kappa_j)\sigma_{k}^{jj}\sigma_{k-2}(\kappa|ij)-\kappa_i^2(\sigma_{k}^{ii,jj})^2-\sigma_{k}^{ii}\sigma_{k}^{jj}\\
=&\kappa_i\sigma_{k-2}(\kappa|ij)[\sigma_{k-1}(\kappa|j)-\kappa_i\sigma_{k-2}(\kappa|ij)]+\sigma_{k}^{jj}[\kappa_j\sigma_{k-2}(\kappa|ij)-\sigma_{k-1}(\kappa|i)]\\
=&\kappa_i\sigma_{k-2}(\kappa|ij)\sigma_{k-1}(\kappa|ij)-\sigma_{k}^{jj}\sigma_{k-1}(\kappa|ij)\\
=&-\sigma_{k-1}^2(\kappa|ij).
\end{align*}
\par

\noindent (2) We have
\begin{align}
&\kappa_i(\sigma_{k}^{pp}\sigma_{k}^{ii,qq}+\sigma_{k}^{qq}\sigma_{k}^{ii,pp}-\sigma_{k}^{ii}\sigma_{k}^{pp,qq})-\sigma_{k}^{pp}\sigma_{k}^{qq}
-\kappa_i^2\sigma_{k}^{ii,pp}\sigma_{k}^{ii,qq}+\kappa_i\sigma_{k}^{ii}\sigma_{k}^{pp,qq}\nonumber\\
=&\kappa_i\sigma_{k}^{qq}\sigma_{k}^{ii,pp}-\sigma_{k}^{pp}\sigma_{k-1}(\kappa|iq)
-\kappa_i^2\sigma_{k}^{ii,pp}\sigma_{k}^{ii,qq}\nonumber\\
=&\kappa_i\sigma_{k-1}(\kappa|iq)\sigma_k^{ii,pp}-\sigma_{k}^{pp}\sigma_{k-1}(\kappa|iq)\nonumber\\
=&-\sigma_{k-1}(\kappa|ip)\sigma_{k-1}(\kappa|iq). \nonumber
\end{align}

\noindent (3) We have
\begin{align*}
(\sigma_{k}^{ii}+\sigma_{k}^{jj})(\kappa_i+\kappa_j)
=&\kappa_i\sigma_{k-1}(\kappa|i)+\kappa_j\sigma_{k-1}(\kappa|j)+\kappa_i\sigma_{k-1}(\kappa|j)+\kappa_j\sigma_{k-1}(\kappa|i)\\
=&2\sigma_k-\sigma_{k}(\kappa|i)-\sigma_{k}(\kappa|j)+\kappa_i^2\sigma_{k-2}(\kappa|ij)+\kappa_i\sigma_{k-1}(\kappa|ij)\\
&+\kappa_j^2\sigma_{k-2}(\kappa|ij)+\kappa_j\sigma_{k-1}(\kappa|ij)\\
=&2\sigma_k-2\sigma_{k}(\kappa|ij)+\kappa_i^2\sigma_{k-2}(\kappa|ij)+\kappa_j^2\sigma_{k-2}(\kappa|ij).
\end{align*}

\noindent (4) We further denote $\tilde\sigma_{m}=\sigma_{m}(\kappa|ipq)$ here. Thus, we have
\begin{align*}
&\sigma_{k}^{qq}\sigma_{k}^{ii,pp}-\sigma_{k}^{ii}\sigma_{k}^{pp,qq}\\
=&[\kappa_p\sigma_{k-2}(\kappa|pq)+\sigma_{k-1}(\kappa|pq)]\sigma_{k-2}(\kappa|ip)-\sigma_{k-2}(\kappa|pq)[\kappa_p\sigma_{k-2}(\kappa|ip)+\sigma_{k-1}(\kappa|ip)]\\
=&\sigma_{k-1}(\kappa|pq)\sigma_{k-2}(\kappa|ip)-\sigma_{k-2}(\kappa|pq)\sigma_{k-1}(\kappa|ip)\\
=&(\kappa_i\tilde\sigma_{k-2}+\tilde\sigma_{k-1})(\kappa_q\tilde\sigma_{k-3}+\tilde\sigma_{k-2})-(\kappa_i\tilde\sigma_{k-3}+\tilde\sigma_{k-2})(\kappa_q\tilde\sigma_{k-2}+\tilde\sigma_{k-1})\\
=&\kappa_i\tilde\sigma_{k-2}^2-\kappa_i\tilde\sigma_{k-1}\tilde\sigma_{k-3}+\kappa_q\tilde\sigma_{k-3}\tilde\sigma_{k-1}-\kappa_q\tilde\sigma_{k-2}^2.
\end{align*}

\noindent (5) We also denote $\tilde\sigma_{m}=\sigma_{m}(\kappa|ipq)$ here. We have
\begin{align*}
\sigma_{k}^{pp}\sigma_{k-1}(\kappa|iq)
=&\sigma_{k-1}(\kappa|p)[\kappa_p\sigma_{k-2}(\kappa|ipq)+\sigma_{k-1}(\kappa|ipq)]\\
=&\kappa_p\sigma_{k-1}(\kappa|p)\tilde\sigma_{k-2}+\sigma_{k-1}(\kappa|p)\tilde\sigma_{k-1}\\
=&[\sigma_k-\sigma_{k}(\kappa|p)]\tilde\sigma_{k-2}+[\kappa_q\kappa_i\tilde\sigma_{k-3}+(\kappa_q+\kappa_i)\tilde\sigma_{k-2}+\tilde\sigma_{k-1}]\tilde\sigma_{k-1}\\
=&\sigma_k\tilde\sigma_{k-2}-[\kappa_q\kappa_i\tilde\sigma_{k-2}+(\kappa_q+\kappa_i)\tilde\sigma_{k-1}+\tilde\sigma_{k}]
\tilde\sigma_{k-2}\\
&+[\kappa_q\kappa_i\tilde\sigma_{k-3}+(\kappa_q+\kappa_i)\tilde\sigma_{k-2}+\tilde\sigma_{k-1}]\tilde\sigma_{k-1}\\
=&\sigma_k\tilde\sigma_{k-2}+\tilde\sigma_{k-1}^2-\tilde\sigma_{k}\tilde\sigma_{k-2}-\kappa_q\kappa_i\tilde\sigma_{k-2}^2
+\kappa_q\kappa_i\tilde\sigma_{k-3}\tilde\sigma_{k-1}.
\end{align*}

\end{proof}

\noindent{\bf Proof of the Theorem \ref{lemm17}}:
For the sake of simplification, we still omit the $\kappa$ in the following calculation, which means that we still let
$$\sigma_k=\sigma_k(\kappa),\ \  \sigma_k^{pp}=\sigma_k^{pp}(\kappa), \ \ \sigma_k^{pp,qq}=\sigma_k^{pp,qq}(\kappa).$$
Let's calculate the left hand side of \eqref{s3.01}. By Lemma \ref{lemm11}, we have
$$K\kappa_i\sigma_{k}^{ii}-1\geq K\kappa_1\sigma_k^{11}/2-1\geq K\theta\sigma_{k}/2-1\geq 0,$$  if the positive constant $K$ and $\kappa_1$ both are sufficiently large.
A
straightforward calculation shows,
\begin{align}\label{s4.03}
&\kappa_i\Big[K\Big(\sum_{j}\sigma_{k}^{jj}\xi_j\Big)^2-\sigma_{k}^{pp,qq}\xi_p\xi_q\Big]-\sigma_{k}^{ii}\xi_i^2+\sum_{j\neq i}a_j\xi_j^2\\
=&\kappa_iK\Big(\sum_{j\neq i}\sigma_{k}^{jj}\xi_j\Big)^2+2\kappa_i\xi_i\Big[\sum_{j\neq i}(K\sigma_{k}^{ii}\sigma_{k}^{jj}-\sigma_{k}^{ii,jj})\xi_j\Big]\nonumber\\
&+[\kappa_iK(\sigma_{k}^{ii})^2-\sigma_{k}^{ii}]\xi_i^2+\sum_{j\neq i}a_j\xi_j^2-\kappa_i\sum_{p\neq i; q\neq i}\sigma_{k}^{pp,qq}\xi_p\xi_q\nonumber\\
\geq&\kappa_iK\Big(\sum_{j\neq i}\sigma_{k}^{jj}\xi_j\Big)^2-\frac{\kappa_i^2\Big[\sum_{j\neq i}(K\sigma_{k}^{ii}\sigma_{k}^{jj}-\sigma_{k}^{ii,jj})\xi_j\Big]^2}{\kappa_iK(\sigma_{k}^{ii})^2-\sigma_{k}^{ii}}\nonumber\\
&+\sum_{j\neq i}a_j\xi_j^2-\kappa_i\sum_{p\neq i; q\neq
i}\sigma_{k}^{pp,qq}\xi_p\xi_q\nonumber\\
=&\sum_{j\neq
i}\left[\kappa_iK(\sigma_{k}^{jj})^2-\frac{\kappa_i^2(K\sigma_{k}^{ii}\sigma_{k}^{jj}-\sigma_{k}^{ii,jj})^2}{\kappa_iK(\sigma_{k}^{ii})^2-\sigma_{k}^{ii}}+a_j\right]\xi_j^2\nonumber\\
&+\sum_{p,q\neq i;p\neq
q}\Big[\kappa_iK\sigma_{k}^{pp}\sigma_{k}^{qq}-\frac{\kappa_i^2(K\sigma_{k}^{ii}\sigma_{k}^{pp}
-\sigma_{k}^{ii,pp})(K\sigma_{k}^{ii}\sigma_{k}^{qq}-\sigma_{k}^{ii,qq})}{\kappa_iK(\sigma_{k}^{ii})^2
-\sigma_{k}^{ii}}-\kappa_i\sigma_{k}^{pp,qq}\Big]\xi_p\xi_q\nonumber,
\end{align}
where, in the second inequality, we have used,
\begin{align*}
&\frac{\kappa_i^2\Big[\sum_{j\neq
i}(K\sigma_{k}^{ii}\sigma_{k}^{jj}-\sigma_{k}^{ii,jj})\xi_j\Big]^2}{\kappa_iK(\sigma_{k}^{ii})^2-\sigma_{k}^{ii}}+2\kappa_i\xi_i\Big[\sum_{j\neq
i}(K\sigma_{k}^{ii}\sigma_{k}^{jj}-\sigma_{k}^{ii,jj})\xi_j\Big]\nonumber\\
&+[\kappa_iK(\sigma_{k}^{ii})^2-\sigma_{k}^{ii}]\xi_i^2\geq 0.
\nonumber
\end{align*}

Thus, we can multiple the term
$\kappa_iK(\sigma_{k}^{ii})^2-\sigma_{k}^{ii}$ in both sides of
\eqref{s4.03}. Then, we get
\begin{align}\label{s4.04}
&[\kappa_iK(\sigma_{k}^{ii})^2-\sigma_{k}^{ii}]\Big\{\kappa_i\Big[K\Big(\sum_{j}\sigma_{k}^{jj}\xi_j\Big)^2-\sigma_{k}^{pp,qq}\xi_p\xi_q\Big]-\sigma_{k}^{ii}\xi_i^2
+\sum_{j\neq i}a_j\xi_j^2\Big\}\\
\geq&\sum_{j\neq
i}\Big[\kappa_iK\sigma_{k}^{ii}\sigma_{k}^{jj}(-\sigma_{k}^{jj}+2\kappa_i\sigma_{k}^{ii,jj})-\kappa_i^2(\sigma_{k}^{ii,jj})^2
+a_j(\kappa_iK\sigma_{k}^{ii}-1)\sigma_{k}^{ii}\Big]\xi_j^2\nonumber\\
&+\sum_{p,q\neq i;p\neq
q}\Big[\kappa_iK\sigma_{k}^{ii}[\kappa_i(\sigma_{k}^{pp}\sigma_{k}^{ii,qq}+\sigma_{k}^{qq}\sigma_{k}^{ii,pp}-\sigma_{k}^{ii}\sigma_{k}^{pp,qq})-\sigma_{k}^{pp}\sigma_{k}^{qq}]\nonumber\\
&-\kappa_i^2\sigma_{k}^{ii,pp}\sigma_{k}^{ii,qq}+\kappa_i\sigma_{k}^{ii}\sigma_{k}^{pp,qq}\Big]\xi_p\xi_q\nonumber
\\
=&\dsum_{j\neq
i}\Big[(\kappa_iK\sigma_{k}^{ii}-1)(\sigma_{k}^{ii}+\sigma_{k}^{jj})(\kappa_i+\kappa_j)\sigma_{k-2}(\kappa|ij)-\sigma_{k-1}^2(\kappa|ij)\Big]\xi_j^2\nonumber\\
&+\sum_{p,q\neq i;p\neq
q}\Big[(\kappa_iK\sigma_{k}^{ii}-1)[\kappa_i(\sigma_{k}^{pp}\sigma_{k}^{ii,qq}+\sigma_{k}^{qq}\sigma_{k}^{ii,pp}-\sigma_{k}^{ii}\sigma_{k}^{pp,qq})-\sigma_{k}^{pp}\sigma_{k}^{qq}]\nonumber\\
&-\sigma_{k-1}(\kappa|ip)\sigma_{k-1}(\kappa|iq)\Big]\xi_p\xi_q\nonumber\\
= &\dsum_{j\neq
i}\Big[(\kappa_iK\sigma_{k}^{ii}-1)(\sigma_{k}^{ii}+\sigma_{k}^{jj})(\kappa_i+\kappa_j)\sigma_{k-2}(\kappa|ij)-\sigma_{k-1}^2(\kappa|ij)\Big]\xi_j^2\nonumber\\
&+\sum_{p,q\neq i;p\neq
q}\Big[(\kappa_iK\sigma_{k}^{ii}-1)(\kappa_i\sigma_{k}^{qq}\sigma_{k}^{ii,pp}-\kappa_i\sigma_{k}^{ii}\sigma_{k}^{pp,qq}-\sigma_{k}^{pp}\sigma_{k-1}(\kappa|iq))\nonumber\\
&-\sigma_{k-1}(\kappa|ip)\sigma_{k-1}(\kappa|iq)\Big]\xi_p\xi_q\nonumber\\
=&(\kappa_iK\sigma_{k}^{ii}-1)\dsum_{j\neq
i}\Big[\kappa_i^2\sigma_{k-2}^2(\kappa|ij)+\kappa_j^2\sigma_{k-2}^2(\kappa|ij)-2\sigma_{k}(\kappa|ij)\sigma_{k-2}(\kappa|ij)\nonumber\\
&+2\sigma_{k}\sigma_{k-2}(\kappa|ij)
-c_{k,K}\sigma_{k-1}^2(\kappa|ij)\Big]\xi_j^2\nonumber\\
&+(\kappa_iK\sigma_{k}^{ii}-1)\sum_{p,q\neq i;p\neq
q}\Big[\kappa_i^2\sigma_{k-2}^2(\kappa|ipq)
-\kappa_i^2\sigma_{k-1}(\kappa|ipq)\sigma_{k-3}(\kappa|ipq)\nonumber\\
&-\sigma_k\sigma_{k-2}(\kappa|ipq) +\sigma_{k}(\kappa|ipq)\sigma_{k-2}(\kappa|ipq)-\sigma_{k-1}^2(\kappa|ipq)\nonumber\\
&-c_{k,K}\sigma_{k-1}(\kappa|ip)\sigma_{k-1}(\kappa|iq)\Big]\xi_p\xi_q\nonumber\\
=&\frac{1}{c_{k,K}}\left[\kappa_i^2\textbf{A}_{k;i}+\sigma_{k}\textbf{B}_{k;i}+\textbf{C}_{k;i}-c_{k,K}\textbf{D}_{k;i}\right],\nonumber
\end{align}
where in the second equality, we have used identities (1),(2) of Lemma \ref{lemm18} and in the forth equality, we have used identities (3),(4),(5) of  Lemma \ref{lemm18}.  We have completed our proof.
\bigskip

\noindent{\it Notes:} The present paper is a plenary version of the section 3 and section 4 in \cite{RWarxiv}. We also would like to mention an interesting paper by Chu \cite{Chu} which appeared after \cite{RWarxiv}.

\end{document}